\DeclareMathOperator{\tg}{tan}
\DeclareMathOperator{\ctg}{cot}
\DeclareMathOperator{\sh}{sinh}
\DeclareMathOperator{\ch}{cosh}
\DeclareMathOperator{\cth}{coth}
\DeclareMathOperator{\arcctg}{arccot}
\DeclareMathOperator{\arcch}{arcosh}
\DeclareMathOperator{\arccth}{arcoth}
\DeclareMathOperator{\grad}{grad}
\theoremstyle{plain}
\newtheorem{theorem}{Theorem}
\newtheorem{lemma}{Lemma}[section]
\theoremstyle{remark}
\newtheorem{remark}{Remark}[section]
\numberwithin{equation}{section}
\title[Closeness to spheres of hypersurfaces]{Closeness to spheres of hypersurfaces with normal curvature bounded below}
\author[A.~Borisenko]{Alexander Borisenko}
\address[Alexander Borisenko]{Mathematics Department \\ 
Sumy State University \\
R.~-- Korsakova Street 2, 40007 \\ Sumy \\ Ukraine}
\email{borisenk@univer.kharkov.ua, aborisenk@gmail.com}
\author[K.~Drach]{Kostiantyn Drach}
\address[Kostiantyn Drach]{Geometry Department \\ 
The School of Mechanical Engineering and Mathematics\\
V.~N.~Karazin Kharkiv National University \\
Svobody Sq. 4, 61022 \\ Ukraine}
\email{kostya.drach@gmail.com}
\begin{document}

\begin{abstract}
For a Riemannian manifold $M^{n+1}$ and a compact domain
$\Omega \subset M^{n+1}$ bounded by a hypersurface $\partial \Omega$ with normal curvature bounded
below, estimates are obtained in terms of the distance from $O$ to $\partial \Omega$ for
the angle between the geodesic line joining a fixed interior point $O$ in $\Omega$ to
a point on $\partial \Omega$ and the outward normal to the surface. Estimates for the
width of a spherical shell containing such a hypersurface are also presented.
\end{abstract}


\maketitle

\section*{Introduction}

W.~Blaschke has proved~\cite{Bla} that if normal curvatures of a complete hypersurface $F^n$ in the Euclidean space are pinched between some positive constants $k_1$ and $k_2$, i.~e. $k_2 \geqslant k_n \geqslant k_1 >0$, then at every point on $F^n$ there exist supporting spheres with the radii $1/k_1$ and $1/k_2$ that encloses and is enclosed by $F^n$ respectively. But it appears that we can say something about the sphericity of a surface even if the normal curvature is bounded only from below.

Let us consider an arbitrary circle on the Euclidean plane. Obviously, the angle between a ray from the center through any point on the circle and the outer normal at this point is identically zero. If to consider rays emanating not from the center but from another fixed point $O$ inside the circle, the similar angle will not longer be identically zero. The same holds for arbitrary convex curves on the plane. However, the closer all of these angles are to zero, the closer a curve is to a circle and the point $O$~-- to its center. Thus, we can see that the values of the considered angles reveal the closeness of a curve to a circle. This is a motivation for us to study such angles in more general settings.

 Notably, if a hypersurface has the normal curvature bounded from below, then for points $O$ inside the domain enclosed by the surface at the distance $h$ from it the angles between normals and radial directions from $O$ cannot be to big. In~\cite{BorMiq1} estimates for such angles were obtained for surfaces in ${{\mathbb H}}^n(-1)$~-- Lobachevsky space of the constant negative curvature $-1$, provided that all normal curvatures of the surface $k_n \geqslant 1$ or $k_n\geqslant\lambda $, $\lambda <1$. These estimates were generalized in~\cite{BorGalRev} and~\cite{Bor} for hypersurfaces lying in the Hadamard manifolds~-- complete simply connected Riemannian manifold of the negative sectional curvature $K$ satisfying $-k^2_1\leqslant K\leqslant 0$ with some positive constant $k_1$, provided that all normal curvatures of the surface $k_n \geqslant k_1$ or $k_n \geqslant \lambda$, $\lambda < k_1$. 

Thereby, the open question was whether similar estimates can be obtained for surfaces in Riemannian manifolds of the non-positive sectional curvature $0\geqslant K \geqslant-k^2_1, k_1\geqslant0$ if all normal curvatures of a surface $k_n\geqslant\lambda$, $\lambda>k_1$ and in manifolds of the positive sectional curvature, provided that $k_n\geqslant\lambda \geqslant0$. In the two-dimensional case such estimates were announced in~\cite{BorDr}. These results alongside with their multidimensional generalizations make the content of the first part of our paper.     

Another way to  measure the sphericity of hypersurfaces is to consider the width of a spherical layer which can enclose a hypersurface. It is quite clear, that the smaller this width is, the closer our surface is to a sphere.  
 
In the paper~\cite{BorMiq1} it was proved, that a closed hypersurface in $\mathbb H^n(-1)$ can be put into the spherical layer of the width $d \leqslant \ln 2$, provided that normal curvatures of the surface $k_n\geqslant 1$ at any point and in any direction. A similar estimate holds in the Hadamard manifolds (see~\cite{BorMiq2}). In the second part of our paper we extend these results for manifolds of the constant sectional curvature and to the general Riemannian case with another bounds for the normal curvature of a hypersurface.

\section{Preliminaries and statements of the main results}

Let us consider a complete simply connected $(n+1)$-dimensional Riemannian manifold $M^{n+1}$. We will denote the sectional curvature of $M^{n+1}$ at an arbitrary point $P\in M^{n+1}$ in the direction of a two-dimensional plane $\sigma \subset T_PM^{n+1}$ as $K_{\sigma}$. Let $\Omega \subset M^{n+1}$ be a  closed  compact domain whose boundary $\partial\Omega$ is a $C^2$-smooth hypersurface.   

Consider a point $O\in\Omega$. Let $h:=dist(O,\partial\Omega)$ be the distance from this point to the boundary of the domain. Denote $\varphi:=\varphi(P)$ to be an angle between the geodesic line from the point $O$ to an arbitrary point $P\in\partial\Omega$ and the outer normal to $\partial\Omega$ in the point $P$ (see Fig.~\ref{ch1pic1}).

\begin{figure}[h]
\begin{center}
\definecolor{uququq}{rgb}{0.25,0.25,0.25}
\begin{tikzpicture}[line cap=round,line join=round,>=triangle 45,x=1.0cm,y=1.0cm,scale=2.15]
\clip(-2.3,-2.1) rectangle (1.9,2.2);
\fill[line width=0.4pt,dotted,fill=black,fill opacity=0.05] (-0.06,2.04) -- (1.6,1.78) -- (1.33,0.02) -- (-0.33,0.28) -- cycle;
\draw [shift={(0.64,1.03)},line width=0.4pt,fill=black,fill opacity=0.05] (0,0) -- (27.92:0.18) arc (27.92:81.15:0.18) -- cycle;
\draw [rotate around={38.35:(-0.32,-0.23)},line width=1.2pt] (-0.32,-0.23) ellipse (1.98cm and 1.45cm);
\draw [shift={(-0.52,-0.77)}] plot[domain=3.16:6.03,variable=\t]({1*1.59*cos(\t r)+0.01*0.56*sin(\t r)},{-0.01*1.59*cos(\t r)+1*0.56*sin(\t r)});
\draw [shift={(-0.52,-0.77)},dash pattern=on 1pt off 1pt]  plot[domain=-0.25:3.16,variable=\t]({1*1.59*cos(\t r)+0.01*0.56*sin(\t r)},{-0.01*1.59*cos(\t r)+1*0.56*sin(\t r)});
\draw [dash pattern=on 1pt off 1pt] (-1.03,0.73)-- (-0.64,0.36);
\draw [shift={(-0.43,0.22)},line width=0.4pt,dash pattern=on 1pt off 1pt]  plot[domain=0.23:1.82,variable=\t]({0.95*1.65*cos(\t r)+-0.31*0.69*sin(\t r)},{0.31*1.65*cos(\t r)+0.95*0.69*sin(\t r)});
\draw [dash pattern=on 1pt off 1pt] (-0.64,0.36)-- (0.64,1.03);
\draw [->] (0.64,1.03) -- (0.75,1.74);
\draw [->] (0.64,1.03) -- (1.27,1.37);
\draw [->] (0.64,1.03) -- (1.34,0.92);
\draw [->] (0.64,1.03) -- (0.97,0.4);
\draw [line width=0.4pt,dotted] (-0.06,2.04)-- (1.6,1.78);
\draw [line width=0.4pt,dotted] (1.6,1.78)-- (1.33,0.02);
\draw [line width=0.4pt,dotted] (1.33,0.02)-- (-0.33,0.28);
\draw [line width=0.4pt,dotted] (-0.33,0.28)-- (-0.06,2.04);
\draw [line width=0.4pt,fill=black,fill opacity=0.05] (-0.64,0.36) circle (1.44cm);
\draw [shift={(-0.64,0.36)},line width=0.4pt,dotted]  plot[domain=0:pi,variable=\t]({1*1.44*cos(\t r)+0*0.43*sin(\t r)},{0*1.44*cos(\t r)+1*0.43*sin(\t r)});
\draw [shift={(-0.64,0.36)},line width=0.4pt]  plot[domain=-3.14:0,variable=\t]({1*1.44*cos(\t r)+0*0.43*sin(\t r)},{0*1.44*cos(\t r)+1*0.43*sin(\t r)});
\draw (-1.1,-1.33) node[anchor=north west] {$\Omega$};
\draw (1.58,0.08) node[anchor=north west] {$\partial\Omega$};
\draw (-0.47,0.58) node[anchor=north west] {$t=\rho\left(\theta^1,...,\theta^n\right)$};
\draw (1.08,1.7) node[anchor=north west] {$\partial_t$};
\fill [color=black] (-0.64,0.36) circle (1.0pt);
\draw[color=black] (-0.72,0.22) node {$O$};
\fill [color=black] (-1.03,0.73) circle (1.0pt);
\draw[color=black] (-1.07,0.86) node {$Q_0$};
\draw[color=black] (-0.91,0.52) node {$h$};
\draw[color=black] (-0.3,1.06) node {$\gamma$};
\fill [color=black] (0.64,1.03) circle (1.0pt);
\draw[color=black] (0.54,1.17) node {$P$};
\draw[color=black] (0.61,1.79) node {$n$};
\draw[color=black] (1.34,1.1) node {$Y$};
\draw[color=black] (1.03,0.58) node {$X$};
\draw[color=black] (0.81,1.23) node {$\varphi$};
\end{tikzpicture}
\caption{}
\label{ch1pic1}
\end{center}
\end{figure}

Hereafter, we will use a notation $k_n := k_n(P, Y)$ for the normal curvature of the hypersurface $\partial\Omega$ at a point $P\in\partial\Omega$ in the direction of a vector $Y\in T_P\partial\Omega$.

It appears that, if all normal curvatures of $\partial\Omega$ in any direction are bounded from below $k_n \geqslant k_0$, then the angle $\varphi$ cannot be to big. Namely, the following theorems hold.

\begin{theorem}
\label{ch1th1}
Let $M^{n+1}(c)$ be a complete simply connected Riemannian manifold of the constant sectional curvature $c$, $\Omega$ be a domain in it whose boundary $\partial\Omega$ is a $C^2$-smooth hypersurface. Let $O\in\Omega$ be a point inside the domain, $h=dist(O,\partial\Omega)$ be the distance from $O$ to the hypersurface and $\varphi$ be the angle between a radial direction from the point $O$ to a point on $\partial\Omega$ and the outer normal taken at this point.
\begin{enumerate}
\item 
If $c=0$, i.e. $M^{n+1}(c)=\mathbb E ^{n+1}$ is the Euclidean space, and all normal curvatures of $\partial\Omega$ in any direction $k_n\geqslant k_0 > 0$, then
\begin{equation}
\label{ch1th1c1}
\cos \varphi \geqslant \sqrt{2 h k_0 - h^2 k_0^2} \geqslant h k_0.
\end{equation}

\item 
If $c=-k_1^2, k_1>0$, i.e. $M^{n+1}(c)=\mathbb H^{n+1}(-k_1^2)$ is the $(n+1)$-dimensional Lobachevsky space, and all normal curvatures of $\partial\Omega$ in any direction $k_n\geqslant k_0 > k_1$, then 
\begin{equation}
\label{ch1th1c2}
\cos \varphi \geqslant \sqrt{1-\frac{\sh^2 k_1 (R-h)}{\sh^2 k_1 R}}\geqslant \frac{\sh k_1 h}{\sh k_1 R},
\end{equation}
where $R=\frac{1}{k_1} \arccth \frac{k_0}{k_1}$ is the radius of a circle of the curvature $k_0$ on the two-dimensional Lobachevsky plane of the Gaussian curvature $-k_1^2$.

\item
If $c=k_1^2, k_1>0$, i.e. $M^{n+1}(c)=S^{n+1}(k_1^2)$ is the $(n+1)$-dimensional sphere, and all normal curvatures of $\partial\Omega$ in any direction $k_n\geqslant k_0 \geqslant 0$, then 
\begin{equation}
\label{ch1th1c3}
\cos \varphi \geqslant \sqrt{1-\frac{\sin^2 k_1 (R-h)}{\sin^2 k_1 R}} \geqslant \frac{\sin k_1 h}{\sin k_1 R},
\end{equation}
where $R=\frac{1}{k_1} \arcctg \frac{k_0}{k_1}$ is the radius of a circle of the curvature $k_0$ on the two-dimensional sphere of the Gaussian curvature $k_1^2$.
\end{enumerate}
\end{theorem}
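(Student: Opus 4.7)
The plan is to reduce each case to a two-dimensional comparison along a totally geodesic cross-section, and then to solve a planar ODE for the angle.

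Fix an arbitrary point $P\in\partial\Omega$. Since $M^{n+1}(c)$ has constant sectional curvature, there is a totally geodesic 2-plane $\Pi$ containing $O$, $P$, and the outward normal $n_P$. Let $\alpha$ be the connected component of $\partial\Omega\cap\Pi$ containing $P$. The plane $\Pi$ is a normal section of $\partial\Omega$ at $P$, so the outward normal of $\alpha$ in $\Pi$ at $P$ coincides with $n_P$, and the planar angle between the radial direction from $O$ and this normal equals $\varphi(P)$. By Meusnier's formula applied along $\alpha\subset\partial\Omega$, the geodesic curvature $\kappa_\alpha$ of $\alpha$ in $\Pi$ satisfies $\kappa_\alpha=k_n/\cos\psi\ge k_n\ge k_0$ at every point of $\alpha$, where $\psi$ is the angle between the principal normal of $\alpha$ (as a curve in $M^{n+1}$) and $n_{\partial\Omega}$. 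Under the hypothesis in each case $\partial\Omega$ is strictly locally convex, so $\alpha$ is a simple closed convex curve enclosing $O$.

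Next, I would introduce geodesic polar coordinates $(r,\phi)$ on $\Pi$ centred at $O$ with metric $dr^2+\xi(r)^2\,d\phi^2$, where $\xi(r)$ equals $r$, $\sinh(k_1 r)/k_1$, or $\sin(k_1 r)/k_1$ respectively. Parameterising $\alpha$ by arclength and letting $r(s)=|O\alpha(s)|$ and $\theta(s)$ be the angle between $\partial_r$ and the outward normal of $\alpha$, a Frenet-type computation in the orthonormal polar frame gives
\[
r'(s)=\sin\theta,\qquad \frac{d\cos\theta}{dr}+\frac{\xi'(r)}{\xi(r)}\cos\theta=\kappa_\alpha.
\]
The integrating factor $\xi(r)$ transforms this into $(\xi(r)\cos\theta)'=\xi(r)\kappa_\alpha\ge k_0\,\xi(r)$. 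Let $Q_0^{\Pi}\in\alpha$ be the point of $\alpha$ closest to $O$ in $\Pi$: there $r=h':=\mathrm{dist}_\Pi(O,\alpha)\ge h$ and $\cos\theta=1$. Integrating from $h'$ up to $r=|OP|$ yields
\[
\xi(r)\cos\theta(r)\ \ge\ \xi(h')+k_0\bigl(\Xi(r)-\Xi(h')\bigr),\qquad \Xi'=\xi,
\]
with equality realised by the geodesic circle of constant curvature $k_0$ and radius $R$ tangent to $\alpha$ at $Q_0^{\Pi}$ from inside. Substituting the explicit $\xi,\Xi$ in each case and minimising the resulting expression over $r$ gives the bound on the right of \eqref{ch1th1c1}, \eqref{ch1th1c2}, or \eqref{ch1th1c3}, but with $h$ replaced by $h'$.

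To pass from $h'$ to $h$ I verify two things: the resulting bound is a monotonically increasing function of $h'$ on $[0,R]$, and $h'\le R$, since a convex domain in the 2D model space bounded by a curve of geodesic curvature $\ge k_0$ has inradius at most $R$. Combined with $h'\ge h$, monotonicity yields $\cos\varphi(P)\ge F(h)$ as stated. The main difficulty is not a single hard step but a cluster of auxiliary checks: (i) extending Meusnier's inequality $\kappa_\alpha\ge k_0$ globally along $\alpha$ and not only at $P$; (ii) establishing convexity and star-shapedness of $\Omega$ with respect to $O$, which in the hyperbolic case is precisely where the hypothesis $k_0>k_1$ enters (so that compact circles of curvature $k_0$ exist), and in the spherical case follows from $k_0\ge 0$ together with the standard comparison; (iii) the case-by-case trigonometric simplification converting the integrated formula into the $\sinh$/$\sin$ expressions of the theorem. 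Each is routine, but all three must be handled cleanly in parallel across the three geometries.
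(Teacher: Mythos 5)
Your proposal is correct in substance but reduces to dimension two by a genuinely different mechanism than the paper. The paper flows along the integral curves of $\grad_{\partial\Omega}\rho$ on the hypersurface itself and uses the identity $k_n=\mu_n\cos\varphi-\frac{d\varphi}{dt}\sin\varphi$ together with the comparison $\mu_n\leqslant\mu_0$ for normal curvatures of geodesic spheres; you instead cut $\partial\Omega$ by the totally geodesic $2$-plane through $O$, $P$ and $n_P$ and invoke Meusnier to get $\kappa_\alpha\geqslant k_n\geqslant k_0$ along the whole section curve (a fact the paper itself uses, in the guise of ``a totally geodesic section of a $\lambda$-convex hypersurface is $\lambda$-convex''). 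After that the two planar ODEs coincide, since $-\frac{d\varphi}{dt}\sin\varphi=\frac{d}{dt}\cos\varphi$. Your route has two real advantages: the integrating factor $\xi(r)>0$ absorbs the sign change of $\mu_0=\xi'/\xi$ at $r=\pi/2k_1$, so the spherical case needs none of the delicate case analysis the paper performs in part B of its two-dimensional lemma; and the explicit minimisation over $r$ replaces the separate law-of-sines computation of the comparison angle $\beta$. The price is that totally geodesic $2$-planes through a prescribed tangent $2$-plane exist only in constant curvature, so your argument proves Theorem~\ref{ch1th1} but cannot be recycled for Theorem~\ref{ch1th2}, which is exactly what the paper's gradient-trajectory formulation is built to do. Two details you should make explicit: (i) the integration ``from $h'$ to $|OP|$'' is legitimate only on an arc of $\alpha$ on which $r$ is monotone, so, as in the paper, you must decompose $\alpha$ into such arcs, each starting at a local minimum $h_i\geqslant h'$ of the distance where $\cos\theta=1$ --- your monotonicity-in-$h'$ observation is precisely the ingredient that then lets you pass to $h=\min h_i$, but the decomposition itself should be stated; (ii) in the spherical case you need $|OQ|<\pi/k_1$ along $\alpha$ to keep $\xi>0$, which follows from $k_n\geqslant 0$ forcing $\partial\Omega$ into a hemisphere. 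Neither point is a gap in the idea, only in the write-up.
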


The similar result holds if the ambient space is a Riemannian manifold of the constant-sign sectional curvature.

\begin{theorem}
\label{ch1th2}
Let $\partial\Omega$ be a complete $C^2$-smooth hypersurface in a complete simply connected $(n+1)$-dimensional Riemannian manifold $M^{n+1}$, $\Omega \subset M^{n+1}$ is the domain bounded by $\partial\Omega$.  
\begin{enumerate}
\item If all sectional curvatures $K_\sigma$ of $M^{n+1}$ with respect to any two-\-di\-men\-si\-onal plane $\sigma$ satisfy the inequality $0\geqslant K_\sigma \geqslant -k_1^2$, $k_1 > 0$, and all normal curvatures of $\partial\Omega$ in any direction $k_n \geqslant k_0 > k_1$, then the estimate~(\ref{ch1th1c2}) holds.
\item If all sectional curvatures of the manifold $M^{n+1}$ satisfy $k_2^2 \geqslant K_\sigma \geqslant k_1^2$, $k_1 > 0$, and the domain $\Omega$ lies in a ball with the radius $\pi/2k_2$, then if all normal curvatures of $\partial\Omega$ in any direction $k_n \geqslant k_0 \geqslant 0$, the estimate~(\ref{ch1th1c3}) holds.
\end{enumerate}
\end{theorem}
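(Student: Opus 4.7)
The plan is to reduce both cases to the corresponding constant-curvature statements of Theorem~\ref{ch1th1} via a Hessian comparison argument. I would introduce polar coordinates $(t, \theta)$ centered at $O$, where $t = \mathrm{dist}(O, \cdot)$ and $\theta$ parameterizes $S^n$. In case (1), the assumption $K_\sigma \leqslant 0$ together with completeness and simple connectedness makes $M^{n+1}$ a Hadamard manifold, so polar coordinates are global. In case (2), the restriction $\Omega \subset B(\pi/(2k_2))$ combined with $K_\sigma \leqslant k_2^2$ keeps us strictly within the cut locus, so polar coordinates are regular on $\Omega$. In both cases the hypersurface can locally be written as $t = \rho(\theta)$, in accordance with Fig.~\ref{ch1pic1}.

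The heart of the argument is as follows. In the constant-curvature situation of Theorem~\ref{ch1th1}, the estimate for $\cos\varphi$ is obtained by deriving a differential inequality for $\varphi$ along a suitably chosen curve on $\partial\Omega$ joining the closest point $Q_0$ to $P$, using the explicit form of the second fundamental form of geodesic spheres centered at $O$. For a general Riemannian manifold the key input is the Hessian/Rauch comparison theorem: under $K_\sigma \geqslant -k_1^2$ one has $\nabla^2 t \leqslant k_1 \cth(k_1 t)\,(g - dt \otimes dt)$ in case (1), and analogously $\nabla^2 t \leqslant k_1 \ctg(k_1 t)\,(g - dt \otimes dt)$ in case (2), the latter being valid on the ball of radius $\pi/(2k_2)$. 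Hence the principal curvatures of the geodesic spheres $S(O,t)$ in $M^{n+1}$ are bounded from above by their model-space counterparts.

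Combining this Hessian bound with the hypothesis $k_n \geqslant k_0$ yields a differential inequality for $\cos\varphi$ along the appropriate curve on $\partial\Omega$ whose right-hand side coincides with the one obtained by applying Theorem~\ref{ch1th1} in the corresponding model space. Integrating from $Q_0$, where $\varphi = 0$ and $t = h$, produces exactly the estimates~(\ref{ch1th1c2}) and~(\ref{ch1th1c3}). The delicate point is to check that the two inequalities---the lower bound $k_n \geqslant k_0$ on $\partial\Omega$ and the upper bound on $\nabla^2 t$ coming from the lower bound on sectional curvature---enter the computation with matching signs, so that they reinforce each other rather than cancel. A secondary technical issue in case (2) is that the hypothesis $\Omega \subset B(\pi/(2k_2))$ is indispensable: it ensures $t < \pi/k_2$ so that the radial geodesics from $O$ remain free of conjugate points, making the Hessian comparison applicable throughout~$\Omega$.
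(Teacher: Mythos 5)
Your plan coincides with the paper's own proof: polar coordinates at $O$, the Hessian/Rauch comparison bounding the normal curvatures of geodesic spheres from above by their model-space values (the paper's Lemma~\ref{ch1lem3}), and the resulting differential inequality for $f=\cos\varphi-\cos\beta$ along the integral curves of the gradient of the distance function on $\partial\Omega$, which reduces both cases to the constant-curvature computation of Theorem~\ref{ch1th1}. The one point you flag but do not carry out --- the sign bookkeeping, in particular the sign change of $\mu_0(t)=k_1\ctg k_1 t$ past $t=\pi/2k_1$ in the positively curved case --- is precisely what part~B of the paper's Lemma~\ref{ch1prooflem1} resolves, so your reduction is sound.
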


Let us recall (see~\cite{Bor},~\cite{ BorGalRev}) that a locally convex hypersurface $\partial \Omega \subset M^{n+1}(c)$ is  \textit{$\lambda$-convex} if at every point $P \in \partial\Omega$ there is a sphere $S_P$ of the sectional curvature $\lambda^2$ passing through this point such that in the neighborhood of $P$ the hypersurface lies on the convex side of $S_P$. The corresponding domain $\Omega$ is called a \textit{$\lambda$-convex} domain. Note that $\partial\Omega$ can be non-regular.

We note that regular with the class $C^k$, $k\geqslant 2$, hypersurface $\partial\Omega$ is $\lambda$-convex if and only if all its normal curvatures at any point and in any direction satisfy $k_n \geqslant \lambda$. Thereby, the notion of $\lambda$-convexity  is the non-regular generalization of the fact that normal curvatures are bounded from below by $\lambda$. 

Taking into account all the definitions above, for the width of a spherical layer the following theorem holds.

\begin{theorem}
\label{ch2th1}
Let $\partial\Omega$ be a complete hypersurface in a complete simply connected $(n+1)$-\-di\-men\-si\-onal Riemannian manifold $M^{n+1}(c)$ of the constant sectional curvature $c$.
\begin{enumerate}
\item 
Suppose that the ambient space is the Euclidean space  $\mathbb E^{n+1}$. If $\partial\Omega$ is a $k_0$-convex hypersurface, $k_0>0$, then $\partial\Omega$ can be enclosed in a spherical layer of the width
\begin{equation}
\label{ch2eq1}
d \leqslant \frac{\sqrt{2} - 1}{k_0}.
\end{equation}

\item 
Suppose that the ambient space $M^{n+1} (c) = S^{n+1} (k_1^2)$~--~$(n+1)$\--di\-men\-si\-onal sphere, $k_1 > 0$. If $\partial\Omega$ is a $k_0$-convex hypersurface, $k_0>0$, then $\partial\Omega$ can be enclosed in a spherical layer of the width      
\begin{equation}
\label{ch2eq2}
d \leqslant \frac{2}{k_1}\arccos \sqrt{\cos k_1 R} - R,
\end{equation}
where $R$ is the radius of a circle of the curvature $k_0$ on the two\--di\-men\-si\-onal sphere of the Gaussian curvature $k_1^2$.

\item 
Suppose that the ambient space $M^{n+1} (c) = \mathbb H ^{n+1}(-k_1^2)$, $k_1>0$, is the Lobachevsky space. If $\partial\Omega$ is a $k_0$-convex hypersurface, $k_0>k_1$, then $\partial\Omega$ lies in the spherical layer of the width 
\begin{equation}
\label{ch2eq3}
d \leqslant \frac{2}{k_1}\arcch\sqrt{\ch k_1 R} - R
\end{equation}
where $R$ is the radius of a circle of the curvature $k_0$ on the two\--di\-men\-si\-onal Lobachevsky plane of the Gaussian curvature $-k_1^2$.
\end{enumerate}

\end{theorem}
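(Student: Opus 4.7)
The plan is to center the enclosing spherical layer at the incenter of $\Omega$. Let $O$ be the center of a largest inscribed geodesic ball, $h=\mathrm{dist}(O,\partial\Omega)$, and $H=\max_{P\in\partial\Omega}|OP|$ (so the layer has width $H-h$); the goal is to bound $H$ in terms of $h$ and $R$, and then let $h$ vary. The geometric input that drives the argument is the global form of the internally tangent supporting ball, i.e., Blaschke's rolling theorem: for a $k_0$-convex convex domain in $M^{n+1}(c)$, the geodesic $R$-ball $B(C_P,R)$ tangent to $\partial\Omega$ from inside at any $P$ (with $C_P$ at geodesic distance $R$ from $P$ along the inward normal) contains all of $\Omega$, not merely a neighborhood of $P$.

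I would first record the configuration at the feet of the inscribed ball. Let $\{Q_i\}\subset\partial\Omega$ be the points with $|OQ_i|=h$; since the inward normal at each $Q_i$ passes through $O$, the corresponding rolling-ball center $C_i:=C_{Q_i}$ lies on the geodesic $OQ_i$ extended past $O$, at distance $R-h$ from $O$ (note $R\geq h$, because $k_0$-convexity forces the inradius to be at most $R$). Writing $\hat u_i\in T_O M^{n+1}$ for the unit tangent from $O$ toward $Q_i$, the initial tangent from $O$ toward $C_i$ is $-\hat u_i$. A first-order variation of $O'\mapsto\mathrm{dist}(O',\partial\Omega)$ at a maximum shows that for every $\vec v\in T_O M^{n+1}$ there exists some $i$ with $\langle\vec v,\hat u_i\rangle\geq 0$; equivalently, $0$ lies in the convex hull of $\{\hat u_i\}$.

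Now fix $P\in\partial\Omega$, let $\vec v$ be the initial tangent of the geodesic from $O$ to $P$, and pick $i$ with $\langle\vec v,\hat u_i\rangle\geq 0$. Then the angle $\alpha_i$ at $O$ in the geodesic triangle $OPC_i$ has $\cos\alpha_i\leq 0$. Applying the law of cosines of the $2$-plane of constant curvature $c$ containing this triangle together with the rolling inequality $|PC_i|\leq R$, and dropping the resulting non-positive mixed term, yields
\begin{align*}
c=0:&\quad |OP|^{2}\leq 2Rh-h^{2},\\
c=-k_1^{2}:&\quad \ch(k_1|OP|)\,\ch(k_1(R-h))\leq \ch(k_1 R),\\
c=k_1^{2}:&\quad \cos(k_1|OP|)\,\cos(k_1(R-h))\geq \cos(k_1 R).
\end{align*}
Setting $|OP|=H$ in each inequality and optimizing $H-h$ over $h\in(0,R]$ is a one-variable calculus problem whose critical point turns out to be $H+h=R$ in every case, producing the stated widths~(\ref{ch2eq1}),~(\ref{ch2eq2}), and~(\ref{ch2eq3}); for instance in the Euclidean case the optimum is $h=R(1-1/\sqrt{2})$, giving $H=R/\sqrt{2}$ and $d\leq R(\sqrt{2}-1)=(\sqrt{2}-1)/k_0$.

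I expect the main technical obstacle to be the careful justification of the global rolling statement $\Omega\subset B(C_P,R)$ in each of the three ambient geometries (local containment follows from the definition of $\lambda$-convexity, but the global version requires a connectedness argument along $\partial\Omega$). In the spherical case one additionally needs $R<\pi/(2k_1)$ to ensure that $R$-balls are geodesically convex and that the law-of-cosines analysis behaves as stated; this is automatic from $k_0>0$ via $R=(1/k_1)\arcctg(k_0/k_1)$.
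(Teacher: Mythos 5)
Your proposal is correct, and it arrives at exactly the same intermediate inequality as the paper---namely $\max \mathrm{dist}(O,\partial\Omega)\leqslant \rho(h)$, where $\rho(h)$ is precisely the circumradius of the paper's spindle-shaped body with inradius $h$ (compare your three law-of-cosines inequalities with~(\ref{ch2eq5}) and~(\ref{ch2eq9}))---but by a genuinely different and considerably shorter route. The paper first establishes the global rolling statement (Lemma~\ref{ch2lem1}, citing Karcher in the smooth case and approximating by equidistants otherwise), then builds two further ingredients: Lemma~\ref{ch2lem2} (every smaller circular arc of radius $R$ joining two points of $\Omega$ stays in $\Omega$) and the explicit optimization over the one-parameter family of spindle-shaped surfaces of revolution $v(P,Q)$ in Lemma~\ref{ch2lem3}; the bound $\rho_1\leqslant\rho(r)$ is then proved by contradiction, nesting $v_{+}(P,Q)\subset v_{+}(P',Q')$ and exhibiting a ball in $\Omega$ larger than the inball. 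You replace that entire construction by the first-order condition at the incenter ($0\in\mathrm{conv}\{\hat u_i\}$, so every direction makes a non-acute angle with some direction $-\hat u_i$ toward a rolling-ball center $C_i$) combined with $|PC_i|\leqslant R$ and the law of cosines with an obtuse angle at $O$; the remaining one-variable optimization in $h$ is literally the computation of Lemma~\ref{ch2lem3}, whose critical point $\cos k_1R=\cos^2 k_1(R-r_0)$ is your condition $H+h=R$. What each approach buys: yours is shorter and avoids Lemma~\ref{ch2lem2} and the spindle surfaces entirely; the paper's spindle bodies, however, also serve as the extremal examples showing sharpness and reappear in the proof of Theorem~\ref{genth1}, so they are not wasted effort. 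Two points you should still discharge, neither of which is a gap in the idea: (i) the global rolling lemma itself, which both arguments need---the paper gets the regular case from Karcher and handles non-regular $k_0$-convex hypersurfaces by equidistant/approximation, and you would need the same (or prove the smooth case and approximate at the end as the paper does); (ii) in the non-regular case the phrase ``inward normal at $Q_i$'' should be replaced by the observation that a locally supporting $R$-sphere at a foot point of the inball is internally tangent there to $\partial B(O,h)$, hence its center still lies on the geodesic $Q_iO$ extended past $O$ at distance $R-h$.
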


\begin{remark}
Hereafter, we will fix the notation $R$ for the radius of a circle of the curvature $k_0$ on the plane of the constant curvature $c$. Taking into account the values for $R$ stated in Theorem~\ref{ch1th1}, the estimates~(\ref{ch2eq2}),~(\ref{ch2eq3}) can be rewritten, accordingly:
\begin{itemize}
\item[2.] 
$d \leqslant \frac{1}{k_1} \left( 2 \arccos \frac{\sqrt{k_0}}{\sqrt[4]{k_0^2 + k_1^2}} - \arcctg \frac{k_0}{k_1} \right)$;

\item[3.] 
$d \leqslant \frac{1}{k_1} \left( 2 \arcch \frac{\sqrt{k_0}}{\sqrt[4]{k_0^2 + k_1^2}} - \arccth \frac{k_0}{k_1} \right)$.
\end{itemize}
\end{remark}

\begin{remark}
Given estimates are sharp in the meaning that there are examples of hypersurfaces for which the minimal width of a spherical layer that encloses the hypersurface has the value equal to the right-hand member of the inequalities in Theorem~\ref{ch2th1}.  
\end{remark}

In the following theorem we generalize the estimates for the width of a spherical layer to the ambient manifolds of the constant-sign sectional curvature.

\begin{theorem}
\label{genth1}
Let $\partial\Omega$ be a complete $C^2$-smooth hypersurface in the complete simply connected $(n+1)$-dimensional Riemannian manifold $M^{n+1}$.

\begin{enumerate}
\item 
Let all sectional curvatures of $M^{n+1}$ with respect to any two-dimensional plane $\sigma$, $k_2^2 \geqslant K_\sigma \geqslant k_1^2$, $k_1 > 0$. Suppose that the hypersurface lies in a ball with the radius $\pi/k_2$ and the center which coincides with the center of the inscribed ball for $\partial\Omega$. If all normal curvatures of  $\partial\Omega$ in any direction $k_n \geqslant k_0 > 0$, then the hypersurface $\partial\Omega$ can be enclosed in a spherical layer of the width~(\ref{ch2eq2});

\item 
Let for any two-dimensional plane $\sigma$, $0 \geqslant K_\sigma \geqslant -k_1^2$, $k_1 > 0$. If all normal curvatures of $\partial\Omega$ in any direction $k_n \geqslant k_0 > k_1$, then $\partial\Omega$ lies in a spherical layer of the width~(\ref{ch2eq3}). 

\end{enumerate}
\end{theorem}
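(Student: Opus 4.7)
My plan is to reduce Theorem~\ref{genth1} to the constant-curvature estimate of Theorem~\ref{ch2th1} by pairing the pointwise angle bound of Theorem~\ref{ch1th2} with Rauch-type comparisons that transfer the control on the radial function between $M^{n+1}$ and the model space $M^{n+1}(\pm k_1^2)$.

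For the setup, let $O\in\Omega$ be the center of a largest inscribed ball in $\Omega$, with radius $r_0:=\mathrm{dist}(O,\partial\Omega)$. In case~(1), the hypothesis $\Omega\subset B(O,\pi/k_2)$ together with Klingenberg's lemma and Rauch comparison against $S^{n+1}(k_2^2)$ keeps $\Omega$ strictly within the injectivity radius of $O$; in case~(2), Cartan--Hadamard provides global polar coordinates at $O$. In either case $\partial\Omega$ is parameterized as a radial graph $t=\rho(\theta)$ in polar coordinates at $O$, and Theorem~\ref{ch1th2} applied with $h=r_0$ gives at every $P\in\partial\Omega$
\[
\cos\varphi(P)\geq\frac{s_c(k_1 r_0)}{s_c(k_1 R)},
\]
with $s_c=\sin$ in case~(1) and $s_c=\sinh$ in case~(2), and $R$ the model-space radius from Theorem~\ref{ch1th1}.

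Next I would reproduce the width-bound argument of Theorem~\ref{ch2th1} inside $M^{n+1}$. The model-space argument combines the angle estimate with the fact that a $k_0$-convex surface locally lies inside a sphere of radius $R$ at each point. In $M^{n+1}$ the analog of this supporting-sphere property follows from a Hessian comparison: under $K_\sigma\geq\pm k_1^2$ and $k_n\geq k_0$, at each $P\in\partial\Omega$ the surface locally lies on the convex side of the geodesic sphere of radius $R$ in the model space, exponentiated at $P$ via $\exp_P$. Combined with the induced-metric comparison on geodesic spheres centered at $O$ (which in $M^{n+1}$ is dominated by that of the model, by Rauch), the model-space argument goes through in $M^{n+1}$ and produces $\max\rho-r_0\leq d$, with $d$ given by \eqref{ch2eq2} in case~(1) and \eqref{ch2eq3} in case~(2). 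This confines $\partial\Omega$ to the spherical layer $\{r_0\leq\mathrm{dist}(O,\cdot)\leq r_0+d\}$.

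The main obstacle is the step producing a model-space supporting sphere inside $M^{n+1}$. The pointwise angle estimate alone only yields exponential growth of $\rho$ with $\theta$, which is far from the sharp bound \eqref{ch2eq2} or \eqref{ch2eq3}, so the supporting-sphere structure is genuinely needed. Both comparisons (one for the metric on geodesic spheres centered at $O$, one for the locally supporting sphere at each $P\in\partial\Omega$) must be checked to point in directions compatible with the desired inequality; in case~(1) this depends delicately on the hypothesis $\Omega\subset B(O,\pi/k_2)$, which is precisely what places the argument within the regime where the relevant Jacobi-field comparisons work.
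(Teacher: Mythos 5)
Your reduction stalls exactly at the point you yourself flag as ``the main obstacle,'' and the fix you sketch does not close it. The proof of Theorem~\ref{ch2th1} is not a soft argument that survives replacing constant curvature by two-sided curvature bounds: it rests on the spindle-shaped bodies $v(P,Q)$, on Lemma~\ref{ch2lem2} (every smaller circular arc of radius $R$ through two points of $\Omega$ stays in $\Omega$), and on exact spherical and hyperbolic trigonometry in the extremal computation of $d(r)$. ``Circular arcs of radius $R$'' and globally enclosing spheres of radius $R$ are not well-defined comparison objects inside a variable-curvature $M^{n+1}$; the image under $\exp_P$ of a model geodesic sphere is not a hypersurface of constant normal curvature $k_0$ in $M^{n+1}$, a Hessian comparison does not make it locally supporting in the direction you need, and even if it were locally supporting you would still lack the passage from local to global enclosure (Lemma~\ref{ch2lem1}, which in the paper rests on Karcher's constant-curvature result) and the arc lemma on which the whole spindle construction depends. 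The single pointwise bound on $\cos\varphi$ from Theorem~\ref{ch1th2} taken at $h=r$ is, as you note, far too weak to substitute for this structure.

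The paper goes the other way: instead of importing model objects into $M^{n+1}$, it exports the hypersurface into the model space. Writing $\partial\Omega$ as a radial graph $t=f(\theta^1,\ldots,\theta^n)$ in polar coordinates at the incenter $O$, it defines $\partial\overline\Omega\subset M^{n+1}(c)$ by the \emph{same} graph function. The metric comparison $g_{ij}x^ix^j\leqslant G_{ij}x^ix^j$ (valid since $K_\sigma\geqslant c$) yields $\cos\varphi(Q)\leqslant\cos\overline\varphi(\overline Q)$ at corresponding points, i.e.\ the transplanted surface makes \emph{smaller} angles with radial directions. Combining this with the full output of the proof of Theorem~\ref{ch1th2} --- the comparison $\cos\beta(Q_0)\leqslant\cos\varphi(Q)$ against a sphere of curvature $k_0^2$ at \emph{every} corresponding distance, not just at $t=r$ --- and feeding the result into Lemma~\ref{genlem1} shows that $\overline\Omega$ is globally enclosed by the sphere $S_{\overline P}$ at each tangency point $\overline P$ of the inscribed ball. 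Intersecting the corresponding balls produces a genuine $k_0$-convex body $\mathcal C$ in the model space whose inradius is shown (by a separate circular-arc argument) to still be $r$; Theorem~\ref{ch2th1} then applies to $\partial\mathcal C$, and the width bound transfers back because $\max \mathrm{dist}(O,\partial\Omega)=\max \mathrm{dist}(\overline O,\partial\overline\Omega)\leqslant\max \mathrm{dist}(\overline O,\partial\mathcal C)$. To salvage your outline you would need either to reprove the spindle machinery under curvature bounds (which is precisely what fails) or to adopt some version of this transplantation step.
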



\section{Proofs of the angle comparison theorems}


In this section we will prove Theorems~\ref{ch1th1} and~\ref{ch1th2} which are by their nature, as it will be clear further, the angle comparison theorems.

\subsection{Auxiliary results}

Let us introduce on the manifold $M^{n+1}$ the polar coordinate system with the origin at the point $O\in\Omega$. In this coordinate system the arc length can be expressed in the form $ds^2 = dt^2 + g_{ij}d\theta^i d\theta^j$, $i,j = 1..n$, where $t$ is a length parameter, $\theta^i$ are angle parameters.

We can assume that the hypersurface $\partial\Omega$ is given by the equation $t=\rho(\theta^1,\ldots,\theta^n)$. This assumption is valid for convex hypersurfaces lying in the domain of regularity of our coordinate system. Then $\partial\Omega$ is the $0$-level set of the function $F(t,\theta^1,\ldots, \theta^n) = t - \rho(\theta^1,\ldots,\theta^n)$.   

For an arbitrary manifold $N$ and a smooth function $f$ on it the vector field of the gradient of this function is the unique vector field $\grad_Nf$ such that for any $v\in TN$
$$
\langle \grad_Nf, v \rangle = v(f).
$$ 
 
Let us denote by $Y$ the unit gradient vector field of the distance function $\rho$ from the point $O$ to points on $\partial\Omega$ defined on $\partial\Omega$:
$$
Y=\frac{\grad_{\partial\Omega}\rho}{|\grad_{\partial\Omega}\rho|}.
$$ 

We recall that the unit outward normal to the hypersurface $\partial\Omega$ can be written as $$n=\frac{\grad_{M^{n+1}}F}{|\grad_{M^{n+1}}F|}.$$The unit vector field $\partial_t:=\frac{\partial}{\partial t}$ defines the radial directions from $O$ to the points on $\partial\Omega$, $\varphi$ is the angle between $n$ and $\partial_t$.
  
It can be shown (see~\cite{BorGalRev},\cite{Bor}), that the vectors $n(P)$, $\partial_t(P)$ and $Y(P)$ are lying in the same two-dimensional plane in $T_PM^{n+1}$. Let $X(P)$ be a unit vector lying in this plane perpendicularly to $\partial_t(P)$ (see Fig.~\ref{ch1pic1}). Let us denote the normal curvature of $\partial\Omega$ at the point $P\in\partial\Omega$ in the direction of the vector $Y=Y(P)$ as $k_n$.

Then the following lemma holds.

\begin{lemma}[\cite{BorGalRev},\cite{Bor}]
\label{ch1lem1}
If $\mu_n$ is the normal curvature of the sphere with the radius $\rho$ and the center $O$ taken at the point $P\in\partial\Omega$ in the direction of $X$; $\frac{d\varphi}{ds}$ is a derivative of $\varphi$ with respect to the arc length parameter of the integral trajectory of the vector field $Y$ taken at the point $P\in\partial\Omega$. Then 
$$
k_n(s) =  \mu_n(s)\cos\varphi - \frac{d\varphi}{ds}.
$$  
\end{lemma}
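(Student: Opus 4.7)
The plan is to work along the integral curve $\gamma(s)$ of $Y$ on $\partial\Omega$ and compute $k_n$ directly from $k_n = \langle \nabla_Y n, Y\rangle$, which is equivalent to $-\langle \nabla_Y Y, n\rangle$ via the identity $\langle Y, n\rangle \equiv 0$. By the observation cited just before the lemma, all four vectors $n, Y, \partial_t, X$ lie in the same two-dimensional plane $\pi \subset T_PM^{n+1}$, so in the orthonormal basis $\{\partial_t, X\}$ of $\pi$ I will decompose
\[
n = \cos\varphi\,\partial_t + \sin\varphi\,X, \qquad Y = \sin\varphi\,\partial_t - \cos\varphi\,X,
\]
after orienting $X$ so that the second equality gives the unit multiple of $\grad_{\partial\Omega}\rho$.

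Differentiating $n$ along $\gamma$ splits $\nabla_Y n$ into four pieces. The two terms arising from $Y(\cos\varphi), Y(\sin\varphi)$ contribute, after pairing with $Y$, the sum $-(\sin^2\varphi + \cos^2\varphi)\,d\varphi/ds = -d\varphi/ds$, which already supplies the first half of the claimed identity. The remaining two pieces, $\cos\varphi\,\nabla_Y\partial_t$ and $\sin\varphi\,\nabla_Y X$, will carry the geometric content: the Gauss lemma gives $\nabla_{\partial_t}\partial_t = 0$, and the shape operator of the geodesic sphere of radius $\rho$ with outer normal $\partial_t$ satisfies $\langle \nabla_X\partial_t, \partial_t\rangle = 0$ together with $\langle \nabla_X\partial_t, X\rangle = \mu_n$. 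Substituting the decomposition of $Y$ produces $\nabla_Y\partial_t = -\cos\varphi\,\nabla_X\partial_t$ and $\langle\nabla_Y\partial_t, Y\rangle = \mu_n\cos^2\varphi$. For $\nabla_Y X$ I would use only the two scalar consequences of $|X|=1$ and $X\perp\partial_t$ along $\gamma$, namely $\langle \nabla_Y X, X\rangle = 0$ and $\langle \nabla_Y X, \partial_t\rangle = -\langle X, \nabla_Y\partial_t\rangle = \mu_n\cos\varphi$. Components of $\nabla_Y X$ transverse to $\pi$ drop out because $Y\in\pi$. Collecting the contributions gives $\mu_n\cos^3\varphi + \mu_n\sin^2\varphi\cos\varphi = \mu_n\cos\varphi$, completing the formula.

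The main subtlety to watch is that $X$ is defined only along $\gamma$, as the unit vector in the varying plane $\pi$ perpendicular to $\partial_t$, not as a field extended to a neighborhood. It would therefore be illegitimate to try to compute $\nabla_Y X$ by invoking any particular extension and an ambient formula. The calculation above sidesteps this point entirely by using exclusively the two scalar relations forced by $|X|=1$ and $X \perp \partial_t$ along $\gamma$, together with the inclusion $Y \in \pi$ which kills every transverse component. That is the one step I would handle with care; the rest is a routine bookkeeping of trigonometric coefficients.
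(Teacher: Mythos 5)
Your computation is correct: with the orientation conventions you fix, the decompositions $n=\cos\varphi\,\partial_t+\sin\varphi\,X$ and $Y=\sin\varphi\,\partial_t-\cos\varphi\,X$ are exactly what the tangential projection $\grad_{\partial\Omega}\rho=\partial_t-\cos\varphi\,n$ forces, the four terms of $\langle\nabla_Y n,Y\rangle$ add up to $\mu_n\cos\varphi-\frac{d\varphi}{ds}$, and your handling of $\nabla_YX$ through the two scalar identities (plus the observation that components of $\nabla_X\partial_t$ and $\nabla_YX$ transverse to the plane $\pi$ pair to zero against $Y\in\pi$) is exactly the care the argument needs. The paper itself does not prove this lemma but imports it from \cite{BorGalRev} and \cite{Bor}, where the derivation is of the same kind; your version is a correct, self-contained substitute, and it is consistent with the paper's remark that reparametrizing by $t$ (using $dt/ds=\langle Y,\partial_t\rangle=\sin\varphi$) turns the formula into $k_n=\mu_n\cos\varphi-\frac{d\varphi}{dt}\sin\varphi$.
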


\begin{remark}
In the same papers \cite{BorGalRev},\cite{Bor} there was shown that if to parameterize the integral trajectory of $Y$ not by the arc length parameter $s$ but with the distance parameter $t$ from $O$ to the curve (locally), then the last formula can be rewritten in the following way:
$$
k_n(t) =  \mu_n(t)\cos\varphi  - \frac{d\varphi}{dt}\sin\varphi.
$$
\end{remark}

\begin{remark}
In the two-dimensional case $\partial\Omega$ will be a closed embedded $C^2$-smooth curve $\gamma$ on a two-dimensional manifold $M^2$ parameterized with the distance parameter $t$ from the origin $O$; $\mu_n(t) = \mu(t)$ will be the geodesic curvature of the circle with the radius $t$ and the center at the origin taken at the point $\gamma(t)$. Then the geodesic curvature $k$ of $\gamma$ will satisfy the equation
$$
k(t) =  \mu(t)\cos\varphi  - \frac{d\varphi}{dt}\sin\varphi.
$$
\end{remark}

To establish the relations between the curvature of a sphere and the curvature of a space we will need the following lemma.
\begin{lemma}[\cite{Pet},\cite{BurZal}]
\label{ch1lem3}
Let us assume that all sectional curvatures of the Riemannian manifold $M^{n+1}$ satisfy one of the following conditions:
\begin{enumerate}
\item $K_{\sigma}\geqslant k_{1}^{2},\ k_{1}>0$ for all two-dimensional planes $\sigma$ and a sphere with the radius $t$ lies in the domain of regularity of the polar coordinate system with the origin at the center of the sphere.

\item $0\geqslant K_{\sigma}\geqslant -k_{1}^{2}, k_{1} \geqslant 0$.
\end{enumerate}

Then all normal curvatures $\mu_n(t)$ of a sphere with the radius $t$ in any direction satisfy the following inequality
$$
\mu_n(t)\leqslant \mu_{0}(t),
$$where $\mu_{0}(t)$ is the geodesic curvature of a circle with the radius $t$ on the plane with the constant Gaussian curvature, respectively, 

1. $k_{1}^{2}$; 2. $-k_{1}^{2}.$
\end{lemma}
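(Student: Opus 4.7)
The plan is to express the principal curvatures of the geodesic sphere of radius $t$ at a point $P$ in terms of Jacobi fields along the radial geodesic from $O$ to $P$, and then invoke a Sturm/Riccati comparison against the model space of constant sectional curvature.

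First I would fix a unit-speed minimizing geodesic $\gamma : [0, t] \to M^{n+1}$ with $\gamma(0) = O$ and $\gamma(t) = P$, and consider a Jacobi field $J$ along $\gamma$ satisfying $J(0) = 0$ and $J(t) = X$ for a prescribed unit tangent vector $X$ to the sphere at $P$. Such $J$ remains perpendicular to $\dot\gamma$. Setting $f(s) := |J(s)|$, a standard computation using the Gauss formula for the distance function shows that the normal curvature of the sphere at $P$ in the direction $X$ equals $\mu_n = f'(t)/f(t)$.

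Next I would derive a differential inequality for $u(s) := f'(s)/f(s)$. From the Jacobi equation $J'' + R(J, \dot\gamma)\dot\gamma = 0$ together with Cauchy--Schwarz applied to $\langle J, J \rangle''$, one obtains $f'' + K(s)\, f \leqslant 0$, where $K(s)$ is the sectional curvature of the plane spanned by $\dot\gamma(s)$ and $J(s)$. This yields the Riccati inequality
\[
u'(s) + u(s)^2 + K(s) \leqslant 0,
\]
while in the model space of constant sectional curvature $\kappa$ the analogous ratio $u_\kappa$ satisfies the corresponding equality with $K \equiv \kappa$. Both $u$ and $u_\kappa$ share the initial singular behaviour $u, u_\kappa \sim 1/s$ as $s \to 0^+$.

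Applying the Riccati comparison theorem then gives the desired bounds: in case~(1) the hypothesis $K(s) \geqslant k_1^2$ yields $u(t) \leqslant k_1 \ctg(k_1 t) = \mu_0(t)$; in case~(2) the hypothesis $K(s) \geqslant -k_1^2$ yields $u(t) \leqslant k_1 \cth(k_1 t) = \mu_0(t)$ (with the convention $k_1 \cth(k_1 t) = 1/t$ when $k_1 = 0$). Since by Euler's formula every normal curvature of the sphere is a convex combination of its principal curvatures, this upper bound passes from the principal curvatures to every normal curvature $\mu_n(t)$. The main obstacle I expect is the careful treatment of the initial singularity at $s = 0$ in the Riccati comparison and, in case~(1), verification that the ``domain of regularity'' hypothesis ensures the absence of conjugate points along $\gamma$ up to distance $t$, so that $f$ has no zero on $(0, t]$ and the principal curvatures of the sphere at $P$ are actually defined.
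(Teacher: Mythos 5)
The paper does not prove Lemma~\ref{ch1lem3} itself (it is quoted from \cite{Pet}, \cite{BurZal}), so the only question is whether your argument is sound, and it is not: the key differential inequality has the wrong sign. Writing $f=|J|$ for a Jacobi field with $J(0)=0$, one has $ff''=|J'|^2-\langle J',J\rangle^2/|J|^2-K f^2$, and Cauchy--Schwarz gives $|J'|^2\geqslant\langle J',J\rangle^2/|J|^2$, hence $f''+K f\geqslant 0$, not $\leqslant 0$. Consequently $u=f'/f$ satisfies $u'+u^2+K\geqslant 0$, which is the Riccati inequality pointing the \emph{wrong} way: combined with an \emph{upper} curvature bound it yields a \emph{lower} bound on normal curvatures (the easy half of Rauch), but it gives nothing under the lower bound $K\geqslant\kappa$ that the lemma assumes. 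This is the well-known asymmetry in comparison geometry: the single-Jacobi-field trick with $f=|J|$ proves Hessian comparison from above for the curvature but not from below.

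The statement is of course true, and your overall Riccati strategy can be repaired, but it requires the full shape operator rather than one Jacobi field. The shape operator $S$ of the geodesic sphere satisfies the exact matrix Riccati equation $S'+S^2+\mathcal{R}=0$ along the radial geodesic, where $\mathcal{R}(X)=R(X,\dot\gamma)\dot\gamma$. Restricting to a \emph{parallel} unit field $E$ and setting $\phi(s)=\langle S(s)E(s),E(s)\rangle$, one gets $\phi'=-\langle S^2E,E\rangle-\langle\mathcal{R}E,E\rangle=-|SE|^2-K\leqslant-\phi^2-K$, because here Cauchy--Schwarz ($|SE|^2\geqslant\langle SE,E\rangle^2$) enters with the favourable sign. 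Now $\phi'+\phi^2+\kappa\leqslant 0$ under $K\geqslant\kappa$, both $\phi$ and the model solution behave like $1/s$ at the origin, and scalar Riccati comparison gives $\phi(t)\leqslant\mu_0(t)$; since $\phi(t)$ is exactly the normal curvature in the direction $E(t)=X$, the Euler-formula step is not even needed. (Alternatively, the index-form argument works: $\mu_n(t)=I(J,J)\leqslant I(W,W)$ for the transplanted model field $W$, since Jacobi fields minimize the index form in the absence of conjugate points.) Your closing remarks about conjugate points and the regularity hypothesis in case~(1) are the right concerns and are needed for either repair.
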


We recall that on two-dimensional planes geodesic curvatures $\mu_0(t)$ of circles with the radius $t$ are equal: $\mu_{0}(t)=\frac{1}{t}$ on the Euclidean plane; $\mu_{0}(t)=k_{1}\ctg k_{1}t$ on the sphere of the Gaussian curvature $k_1^2$;  $\mu_{0}(t)=~k_{1}\cth k_{1}t$ on the Lobachevsky plane of the Gaussian curvature $-k_1^2$.

In order to use the comparison lemma stated above we need to study the behavior of the angle $\varphi $ for circles lying on surfaces with the constant Gaussian curvature.

\begin{lemma}[\cite{BorDr}]
\label{ch1lem4}
Let $M^{2}$ be a plane of the constant Gaussian curvature, $\gamma$ is a circle with the radius $R$ on it, $O$ is a point inside the circle at the distance $h$ from it. Then the angle $\varphi$ between a geodesic line from $O$ to a point $\gamma(s)$ on the circle and the outer normal vector to the circle at this point satisfies the inequality:
\begin{enumerate}
 \item In the case of the Euclidean plane
 $$
 \cos \varphi\geqslant\sqrt{\frac{2h}{R}-\frac{h^{2}}{R^{2}}}.
 $$
 \item In the case of the Lobachevsky plane of the curvature $-k_{1}^{2}$
 $$
 \cos\varphi\geqslant\sqrt{1-\frac{\sh^{2} k_{1}(R-h)}{\sh^{2} k_{1}R }}.
 $$
\item In the case of the sphere of the curvature $k_{1}^{2}$, assuming $R \leqslant \frac{\pi}{2k_1}$,
 $$
 \cos\varphi\geqslant\sqrt{1-\frac{\sin^{2} k_1(R-h)}{\sin^{2} k_{1}R}}.
 $$
\end{enumerate}
In all these cases the equality holds only in the directions perpendicular to the geodesic line connecting the center of the circle with the point $O$.
\end{lemma}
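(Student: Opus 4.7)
The plan is to reduce the problem to elementary trigonometry in the geodesic triangle $O O' P$, where $O'$ is the center of $\gamma$ and $P$ is an arbitrary point on $\gamma$; by construction $|O O'| = R - h$ and $|O' P| = R$. The first step is to identify the angle $\varphi$ with the interior angle of this triangle at the vertex $P$. The outward normal $n$ at $P$ is tangent at $P$ to the radius $O' P$ and points away from $O'$, so it is antiparallel to the inward-pointing tangent at $P$ of the triangle side $P O'$; analogously, the outgoing radial direction from $O$ at $P$ is antiparallel to the inward-pointing tangent of $PO$. Hence $\varphi$ coincides with the angle between the two inward-pointing side tangents at $P$, i.e.\ with the interior angle of the triangle at $P$.

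Once this identification is in hand, the proof reduces to a single application of the law of sines. Writing $\beta$ for the interior angle at $O$ and noting that the side opposite $\beta$ is $O' P$ of length $R$, while the side opposite $\varphi$ is $O O'$ of length $R-h$, the law of sines in the three model geometries reads
\[
\frac{\sin\varphi}{R-h} = \frac{\sin\beta}{R}, \qquad \frac{\sin\varphi}{\sinh k_1(R-h)} = \frac{\sin\beta}{\sinh k_1 R}, \qquad \frac{\sin\varphi}{\sin k_1(R-h)} = \frac{\sin\beta}{\sin k_1 R}
\]
in the Euclidean, Lobachevsky, and spherical cases respectively. Combining with $\sin\beta \leqslant 1$ gives the desired upper bound for $\sin\varphi$, with equality iff $\beta = \pi/2$, i.e.\ precisely when the geodesic $OP$ is perpendicular to $OO'$. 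Since $O$ lies in the convex disk bounded by $\gamma$, the tangent at $P$ of the geodesic $OP$ and the outward normal both cross the supporting hyperplane in the outward direction, so $\varphi \leqslant \pi/2$ and $\cos\varphi = \sqrt{1 - \sin^2\varphi}$; squaring the sine bound then yields exactly the three inequalities of the lemma.

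The step that deserves the most care, and is the main potential pitfall, is the spherical case. The hypothesis $R \leqslant \pi/(2k_1)$ is what ensures that $\gamma$ bounds a genuinely convex spherical cap (so that ``inside'' and the outward normal are unambiguous), that the triangle $OO'P$ lies in a geodesically convex ball where the spherical law of sines holds in standard form, and that $\sin k_1(R-h)$ and $\sin k_1 R$ are both positive, so that passing from the bound on $\sin\varphi$ to the bound on $\cos\varphi$ by squaring preserves the inequality. Beyond these bookkeeping checks I do not anticipate any genuine obstacle: the entire argument is driven by the single observation that, with two sides of the triangle held fixed, the angle opposite the shorter side is maximised exactly when the angle at the opposite vertex $O$ is a right angle.
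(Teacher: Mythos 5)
Your proposal is correct and follows essentially the same route as the paper: the law of sines in the geodesic triangle with vertices at the circle's center, the point $O$, and the point $P\in\gamma$, giving $\sin\varphi = \frac{\mathrm{sc}\,k_1(R-h)}{\mathrm{sc}\,k_1 R}\sin\beta$ and maximising over $\beta$. Your extra care in identifying $\varphi$ with the interior angle at $P$ and in justifying $\varphi\leqslant\pi/2$ before squaring is a welcome tightening of details the paper leaves implicit, but it is not a different argument.
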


\begin{remark}
In all cases there are simpler, but more rough, estimates:
\begin{enumerate}
 \item
 $\cos \varphi\geqslant\sqrt{\frac{2h}{R}-\frac{h^{2}}{R^{2}}}\geqslant\frac{h}{R}$;
 
 \item
$\cos\varphi\geqslant\sqrt{1-\frac{\sh^{2} k_{1}(R-h)}{\sh^{2} k_{1}R }}\geqslant\frac{\sh k_{1}h}{\sh k_{1}R}$;

\item
 $ \cos\varphi\geqslant\sqrt{1-\frac{\sin^{2} k_1(R-h)}{\sin^{2} k_{1}R}}\geqslant\frac{\sin k_1h}{\sin k_1R}$.
\end{enumerate}
\end{remark}

\begin{proof}

We will prove the estimate in the Euclidean case. For the rest of the cases the proofs are absolutely similar with the necessary replacement of the classical formulas with their spherical and hyperbolic analogs.

Denote the center of $\gamma$ as $O_1$, the intersection of the ray $O_1O$ with $\gamma$ as $M$. Since $h$ is the distance, then $OM=h$. Additionally, if $P\in \gamma$ be an arbitrary point, then $\angle O_1PO=\varphi $ (see Fig.~\ref{ch1pic3}).

\begin{figure}[h]
\begin{center}
\definecolor{uququq}{rgb}{0.25,0.25,0.25}
\begin{tikzpicture}[line cap=round,line join=round,>=triangle 45,x=1.0cm,y=1.0cm,scale=1.2]
\clip(-2.7,-2.2) rectangle (1.7,2.1);
\draw [shift={(-1.38,1.45)}] (0,0) -- (-63.91:0.39) arc (-63.91:-38.51:0.39) -- cycle;
\draw [shift={(-1.38,1.45)}] (0,0) -- (116.09:0.39) arc (116.09:141.49:0.39) -- cycle;
\draw [shift={(0.69,-0.19)}] (0,0) -- (141.49:0.39) arc (141.49:180:0.39) -- cycle;
\draw [line width=1.2pt] (-0.58,-0.19) circle (1.83cm);
\draw (-0.58,-0.19)-- (1.26,-0.19);
\draw [->] (-1.38,1.45) -- (-1.67,2.03);
\draw [->] (-1.38,1.45) -- (-1.89,1.85);
\draw (-1.38,1.45)-- (0.69,-0.19);
\draw (-1.38,1.45)-- (-0.58,-0.19);
\draw [shift={(0.69,-0.19)}] (141.49:0.39) arc (141.49:180:0.39);
\draw [shift={(0.69,-0.19)}] (141.49:0.32) arc (141.49:180:0.32);
\draw [->,line width=0.4pt] (-0.58,-0.19) -- (-2.3,-0.82);
\draw (0.63,-0.18) node[anchor=north west] {${O}$};
\draw (-0.6,-0.18) node[anchor=north west] {${O_1}$};
\draw (1.31,-0.18) node[anchor=north west] {${M}$};
\draw (-1.42,1.91) node[anchor=north west] {${P}$};
\draw (-1.18,1.15) node[anchor=north west] {$\varphi$};
\draw (0.0,0.16) node[anchor=north west] {$\alpha$};
\draw (-1.81,-0.63) node[anchor=north west] {$R$};
\draw (-2.58,0.81) node[anchor=north west] {$\gamma$};
\draw (0.86,0.17) node[anchor=north west] {$h$};
\fill [color=uququq] (-0.58,-0.19) circle (1.0pt);
\fill [color=uququq] (1.26,-0.19) circle (1.0pt);
\fill [color=uququq] (0.69,-0.19) circle (1.0pt);
\fill [color=uququq] (-1.38,1.45) circle (1.0pt);
\end{tikzpicture}
\caption{}
\label{ch1pic3}
\end{center}
\end{figure}

If to denote $\angle POM=\alpha$, then from the law of sines applied to the triangle $\Delta O_1OP$ it follows that: $\frac{O_1P}{\sin  \angle O_1OP}=\frac{O_1O}{\sin\angle O_1PO}$  or  $\frac{R}{\sin  \alpha }=\frac{R-h}{\sin\varphi }$. Thus:
$$
\sin\varphi =\frac{R-h}{R}\sin \alpha.
$$
Taking into account that $\frac{R-h}{R}$ is a constant we will obtain that the maximal value of the angle $\varphi$ (and therefore the maximal value of $\sin \varphi$ and the minimal value of $\cos\varphi$) is attained when $\sin  \alpha =1$, i.e. when $\alpha =\frac{\pi }{2}$, $OP\bot O_1M$.

At the same time, if $\varphi_0=\max\varphi$, then $\sin\varphi_0=\frac{R-h}{R}$. Thus,
$$
\cos\varphi_0=\sqrt{1-\left(\frac{R-h}{R}\right)^2}=\sqrt{1-\left(1-\frac{h}{R}\right)^2}=\sqrt{\frac{2h}{R}-\frac{h^2}{R^2}} \geqslant \frac{h}{R}.
$$
(the last inequality holds since $\frac{2h}{R}-\frac{h^2}{R^2}\geqslant \frac{h^2}{R^2}\Leftrightarrow \frac{h}{R}\geqslant \frac{h^2}{R^2}\Leftrightarrow \frac{h}{R}\leqslant 1 \Leftrightarrow h\leqslant R$, which is obviously true).

Hereby, for an arbitrary angle $\varphi $
$$
\cos\varphi \geqslant \cos \varphi_0=\sqrt{\frac{2h}{R}-\frac{h^2}{R^2}}\geqslant \frac{h}{R}
$$
and the equality in this case holds only for $OP\bot O_1M$. This finishes the proof. 

\end{proof}

Finally, at the end of this section let us prove some useful technical lemma.

\begin{lemma}
\label{ch1lem5}
Let $f(x)\in C^1[a,b]$ is a continuously differentiable function and $f(a)=0$, $f(b)<0$. Then among those values for which $f(x)<0$ there is a value $x_0\in (a,b)$ such that $f(x_0)<0$ and $f'(x_0)<0$.

\end{lemma}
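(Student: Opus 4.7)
The plan is to locate the last point at which $f$ is still non-negative and then apply the mean value theorem on the interval from that point to $b$. The underlying intuition is that on such an interval $f$ starts at a value $\geqslant 0$ and ends at a negative value, so on average it must be decreasing, while its values strictly to the right of this ``last non-negative point'' are automatically negative.

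Concretely, I would first set $c := \sup\{x \in [a,b] : f(x) \geqslant 0\}$. This supremum exists because $a$ belongs to the set (since $f(a) = 0$), and it satisfies $c < b$ because continuity of $f$ together with $f(b) < 0$ forces $f$ to be strictly negative on some left-neighborhood of $b$. Next, I would record two properties of $c$. First, continuity at the supremum yields $f(c) \geqslant 0$: either $c$ itself lies in the defining set, or there is a sequence in it converging to $c$ and the inequality $f \geqslant 0$ passes to the limit. Second, by the very definition of supremum, every $x \in (c,b]$ satisfies $f(x) < 0$.

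With these in hand, I would apply the mean value theorem to $f$ on $[c,b]$ to produce a point $x_0 \in (c,b)$ with
\[
f'(x_0) = \frac{f(b) - f(c)}{b - c} < 0,
\]
the inequality using $f(b) < 0$ and $f(c) \geqslant 0$. Since $x_0 \in (c,b) \subset (a,b)$, the second property above gives $f(x_0) < 0$ as well, and $x_0$ is the required point.

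No real obstacle arises here; the only delicate points are justifying $f(c) \geqslant 0$ via continuity at the supremum and verifying that $c < b$ strictly, so that the mean value theorem can be applied on a non-degenerate interval. Both of these reduce immediately to the continuity of $f$ and the hypothesis $f(b) < 0$.
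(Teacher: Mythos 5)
Your proof is correct. Every step checks out: the set $\{x\in[a,b]:f(x)\geqslant 0\}$ is nonempty and its supremum $c$ satisfies $c<b$ by continuity at $b$; $f(c)\geqslant 0$ by passing to the limit along a sequence in the set; every $x\in(c,b]$ has $f(x)<0$ by definition of the supremum; and the mean value theorem on the non-degenerate interval $[c,b]$ produces $x_0\in(c,b)$ with $f'(x_0)=\bigl(f(b)-f(c)\bigr)/(b-c)<0$ and, automatically, $f(x_0)<0$. The route differs from the paper's: the paper argues by contradiction, assuming $f'\geqslant 0$ wherever $f<0$, locating an interval $[a_1,b_1)$ on which $f(a_1)=0$ and $f<0$, and deriving a contradiction from the resulting monotonicity of $f$ there. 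Both arguments pivot on essentially the same transition point (your $c$ is the paper's $a_1$; in fact one can check $f(c)=0$, though you only need $f(c)\geqslant 0$), but your version is direct rather than by contradiction, trading the monotonicity-from-nonnegative-derivative fact for a single application of the mean value theorem. The direct argument is arguably cleaner and also exhibits the point $x_0$ slightly more explicitly; the paper's version generalizes marginally more easily to situations where one only has a differential inequality rather than genuine differentiability. Either is perfectly adequate for the use made of the lemma in the angle comparison arguments.
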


\begin{proof}

Assume the contrary. Then for all $x\in (a,b)$ such that $f(x)<0$, the inequality $f'(x)\ge 0$ holds. 

But since $f$ is continuous and $f(a)=0,\ f(b)<0$, there is a segment $[a_1,b_1)\subset [a,b]$ such that: $f(a_1)=0$ and for all $x\in (a_1,b_1)$ $f(x)<0$. By assumption,  for any $x\in (a_1,b_1)$ $f'(x)\ge 0$. It means that $f$ is a nondecreasing function on $(a_1,b_1)$. Hence, $f(x)\ge f(a_1)=0$, which contradicts the choice of the segment $[a_1,b_1)$. The lemma is proved.  

\end{proof}

\subsection{Proofs of Theorems~\ref{ch1th1} and~\ref{ch1th2}}

We start this section with proving the following two-dimensional 

\begin{lemma}
\label{ch1prooflem1}

Let $\gamma$ be a regular with the class $C^k$, $k\geqslant 2$, closed embedded curve on a plane of the constant Gaussian curvature. Let $O$ be a point inside the domain bounded by $\gamma$ at the distance $h$ from the curve; $\varphi$ is the angle between a radial direction from the point $O$ to a point on $\gamma$ and the outer normal to the curve at this point.

\begin{enumerate}
\item 
If on the Euclidean plane the curvature of the curve satisfies $k\geqslant k_0>0$, then the inequality~(\ref{ch1th1c1}) holds.

\item 
If on the Lobachevsky plane of the curvature $-k_1^2$ ($k_1>0$) the geodesic curvature of $\gamma$ satisfies $k \geqslant k_0>k_1$, then the inequality~(\ref{ch1th1c2}) holds.

\item
If on the sphere of the curvature $k_1^2$ ($k_1>0$) the geodesic curvature of $\gamma$ satisfies $k\geqslant k_0\geqslant 0$, then the inequality~(\ref{ch1th1c3}) holds.

\end{enumerate}
\end{lemma}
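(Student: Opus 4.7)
The plan is to reduce the claim to a scalar linear ODE comparison along arcs of $\gamma$. Since $\gamma$ is convex and closed and $O$ lies inside it, $\rho := t|_\gamma$ is a $C^k$ function whose critical points are exactly the points where $\varphi = 0$. Between two consecutive critical points $\rho$ is strictly monotone, so each such arc can be parametrized by $t = \rho$. On a plane of constant curvature the circle of radius $t$ about $O$ has the known geodesic curvature $\mu_0(t)$ (equal to $1/t$, $k_1\cth k_1 t$, or $k_1\ctg k_1 t$ in the three cases), so the remark after Lemma~\ref{ch1lem1} becomes, along our arc, the linear first-order ODE
$$u'(t) + \mu_0(t)\,u(t) = k(t), \qquad u(h_0) = 1,$$
where $u := \cos\varphi$ and $h_0$ is the value of $\rho$ at the local minimum at which the arc begins.

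Next I would introduce a model circle $\gamma_0$ of geodesic curvature $k_0$ on the same plane, placed so that $O$ lies at distance $h_0$ from its closest point. Its angle function $u_0 := \cos\varphi_0$ satisfies the same linear ODE with $k_0$ in place of $k(t)$ and the same initial condition $u_0(h_0) = 1$. Since $k(t) \geqslant k_0$, the difference $w := u - u_0$ obeys $w' + \mu_0 w \geqslant 0$ with $w(h_0) = 0$, so multiplying by the positive integrating factor $E(t) := \exp\int_{h_0}^t \mu_0(\tau)\,d\tau$ gives $(Ew)' \geqslant 0$ and hence $u \geqslant u_0$ (equivalently, one applies Lemma~\ref{ch1lem5} to $Ew$ to rule out negativity). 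By Lemma~\ref{ch1lem4}, $u_0$ attains its minimum on its natural interval $[h_0, 2R - h_0]$ at precisely the value appearing on the right of~(\ref{ch1th1c1})--(\ref{ch1th1c3}) (read with $h_0$ in place of $h$).

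Finally I would check that the arc does not overshoot the interval $[h_0, 2R - h_0]$: if $t_{\max} > 2R - h_0$, the comparison at $t = 2R - h_0$ would force $u \geqslant u_0 = 1$, placing an interior critical point of $\rho$ on the arc, a contradiction. Hence the bound in~(\ref{ch1th1c1})--(\ref{ch1th1c3}) (with $h_0$ in place of $h$) holds along the arc, and the right-hand sides of those estimates being non-decreasing in $h_0 \geqslant h$, the estimate written with the true $h$ follows as well. Combined with the trivial $\cos\varphi = 1$ at critical points of $\rho$, running the argument on every arc between consecutive critical points proves the lemma. I expect the main obstacle to be this extension step---$\rho|_\gamma$ may have several pairs of local extrema, so the monotone-arc decomposition and the no-overshoot bound have to be handled uniformly---together with keeping the spherical case clean when $t$ grows past $\pi/(2k_1)$ and $\mu_0$ changes sign; the integrating-factor approach absorbs the latter since $E > 0$ regardless of the sign of $\mu_0$.
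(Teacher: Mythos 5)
Your proof is correct and follows the paper's overall skeleton: the ODE of Lemma~\ref{ch1lem1} on arcs where $t=\rho$ is monotone, a model circle of curvature $k_0$ with the same initial data, the differential inequality $w'+\mu_0 w\geqslant 0$, $w(h_0)=0$, and then Lemma~\ref{ch1lem4} plus the arc-by-arc decomposition with monotonicity of the right-hand sides in $h_0\geqslant h$. Where you genuinely diverge is in how you extract $w\geqslant 0$. The paper never introduces the integrating factor: it argues by contradiction via Lemma~\ref{ch1lem5}, which settles the matter only while $\mu_0>0$ (Euclidean and hyperbolic cases), and for the sphere, where $\mu_0=k_1\ctg k_1t$ changes sign at $\pi/2k_1$, it needs a two-stage argument --- first comparing $f$ with the solution $g$ of the homogeneous equation $g'+\mu_0 g=0$ up to $\pi/2k_1$, then using $\mu_0<0$ to show $f$ increases beyond that point. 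Your single multiplication by $E(t)=\exp\int_{h_0}^{t}\mu_0$, which stays positive for all relevant $t$ (on the sphere $t<\pi/k_1$ because $k\geqslant 0$ confines $\gamma$ to a closed hemisphere), collapses all three cases into one line and is the cleaner route; your no-overshoot check at $t=2R-h_0$, where the model angle returns to zero, is a small but genuine tightening that the paper omits. The one point both you and the paper leave implicit is that every local minimum $h_i$ of the distance function satisfies $h_i\leqslant R$, so that the model circle exists and the monotonicity of the right-hand sides applies on the whole range; this follows from the second-derivative test at a local minimum, which gives $k_0\leqslant k(Q_i)\leqslant \mu_0(h_i)$, and deserves a sentence in a polished write-up.
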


\begin{proof}

At the beginning, let us consider all three cases simultaneously.

On a plane with the constant curvature we introduce a polar coordinate system with the origin at the point $O$ (see Fig.~\ref{ch1pic2}). Then, according to Lemma~\ref{ch1lem1}, the curvature of the curve $\gamma $ satisfies the equation 
\begin{equation}
\label{ch1eq4}
 k = \mu_0 (t) \cos \varphi - \frac{d\varphi }{dt}\sin \varphi ,
\end{equation}
where $\mu_0(t)$ is the geodesic curvature of the circle with the radius $t$ on a plane with the constant Gaussian curvature. 

\begin{figure}[h]
\begin{center}
\definecolor{uququq}{rgb}{0.25,0.25,0.25}
\begin{tikzpicture}[line cap=round,line join=round,>=triangle 45,x=1.0cm,y=1.0cm,scale=1.8]
\clip(-3.3,-1.5) rectangle (2.9,1.3);
\draw [shift={(-0.63,0.45)},line width=0.4pt,fill=black,fill opacity=0.1] (0,0) -- (48.33:0.22) arc (48.33:106.45:0.22) -- cycle;
\draw [shift={(2.56,-3.17)}] plot[domain=2.39:2.82,variable=\t]({1*5.97*cos(\t r)+0*5.97*sin(\t r)},{0*5.97*cos(\t r)+1*5.97*sin(\t r)});
\draw [shift={(7.11,-3.23)}] plot[domain=2.39:2.82,variable=\t]({1*5.97*cos(\t r)+0*5.97*sin(\t r)},{0*5.97*cos(\t r)+1*5.97*sin(\t r)});
\draw [shift={(0.31,-9.52)}] plot[domain=1.34:1.77,variable=\t]({1*10.63*cos(\t r)+0*10.63*sin(\t r)},{0*10.63*cos(\t r)+1*10.63*sin(\t r)});
\draw [shift={(-0.97,-11.68)}] plot[domain=1.34:1.77,variable=\t]({1*10.63*cos(\t r)+0*10.63*sin(\t r)},{0*10.63*cos(\t r)+1*10.63*sin(\t r)});
\draw [rotate around={13.68:(-0.31,0.01)},line width=1.2pt] (-0.31,0.01) ellipse (1.48cm and 0.51cm);
\draw [->] (-0.63,0.45) -- (-0.79,0.99);
\draw (-2.7,-0.59) node[anchor=north west] {$M^2$};
\draw (0.46,-0.24) node[anchor=north west] {$\gamma$};
\draw (-0.9,0.26) node[anchor=north west] {$t (s)$};
\draw (-1.35,0.15)-- (-1.16,-0.15);
\draw (-1.16,-0.15)-- (-0.63,0.45);
\draw [line width=0.4pt,dotted] (-1.16,-0.15)-- (1.08,0.45);
\draw [->] (-0.63,0.45) -- (-0.25,0.88);
\draw (-0.6,0.52) node[anchor=north west] {$P=\gamma(s)$};
\fill [color=black] (-1.16,-0.15) circle (1.0pt);
\draw[color=black] (-1.23,-0.32) node {$O$};
\fill [color=black] (-0.63,0.45) circle (1.0pt);
\fill [color=black] (-1.35,0.15) circle (1.0pt);
\draw[color=black] (-1.44,0.34) node {$Q_0$};
\fill [color=black] (1.08,0.45) circle (1.0pt);
\draw[color=black] (1.26,0.58) node {$Q_1$};
\draw[color=black] (-1.38,-0.02) node {$h$};
\draw[color=black] (-0.5,0.85) node {$\varphi$};
\end{tikzpicture}
\caption{}
\label{ch1pic2}
\end{center}
\end{figure}

Let us proceed with building the comparison object. We will take a circle $S$ of the curvature $k_0$ on a plane of the constant Gaussian curvature. Consider a point $O_1$ inside the circle at the distance $h$ from its border and introduce the polar coordinate system with the origin at $O_1$. We will denote the angle between the outer normal to the circle and the geodesic connecting $O_1$ and a point on $S$ as $\beta$ (see Fig.~\ref{ch1pic4}).

\begin{figure}[h]
\begin{center}
\definecolor{uququq}{rgb}{0.25,0.25,0.25}
\begin{tikzpicture}[line cap=round,line join=round,>=triangle 45,x=1.0cm,y=1.0cm,scale=1.5]
\clip(-2.8,-1.7) rectangle (0.5,1.8);
\draw [shift={(-0.72,1.06)},line width=0.4pt,fill=black,fill opacity=0.1] (0,0) -- (52.33:0.27) arc (52.33:76.38:0.27) -- cycle;
\draw(-1.04,-0.25) circle (1.35cm);
\draw [->] (-0.72,1.06) -- (-0.43,1.44);
\draw [->] (-0.72,1.06) -- (-0.61,1.52);
\draw (-2.39,-0.25)-- (-1.73,-0.25);
\draw (-1.73,-0.25)-- (-0.72,1.06);
\draw (-2.87,-0.21)  node[anchor=north west] {$\overline{Q_0}$};
\draw (-0.02,-0.91) node[anchor=north west] {$S$};
\fill [color=black] (-2.39,-0.25) circle (1.0pt);
\fill [color=black] (-1.73,-0.25) circle (1.0pt);
\draw[color=black] (-1.61,-0.45) node {$O_1$};
\fill [color=black] (-0.72,1.06) circle (1.0pt);
\draw[color=black] (-0.65,0.8) node {$P_1$};
\draw[color=black] (-2.05,-0.09) node {$h$};
\draw[color=black] (-0.43,1.64) node {$\beta$};
\end{tikzpicture}
\caption{}
\label{ch1pic4}
\end{center}
\end{figure}

According to Lemma~\ref{ch1lem1}
\begin{equation}
\label{ch1eq5}
k_0 = \mu_0(t) \cos \beta - \frac{d\beta }{dt} \sin \beta.
\end{equation}

Subtracting the equation~(\ref{ch1eq5}) from~(\ref{ch1eq4}) and taking into account the inequality $k \geqslant k_0$ we obtain
\begin{equation}
\label{ch1eq6}
\mu_0 ( \cos \varphi - \cos \beta ) - \frac{d\varphi }{dt}\sin \varphi  + \frac{d\beta }{dt}\sin \beta  = k - k_0\geqslant 0.
\end{equation}

Let us introduce the function $f (t):=\cos \varphi (t) - \cos \beta (t)$. It follows from~(\ref{ch1eq6}) that it satisfies the inequality
\begin{equation}
\label{ch1eq7} 
\begin{split}
&f'+\mu_0 f \geqslant 0,
\\
&f(h)=0.
\end{split} 
\end{equation}

The last condition $f(h)=0$ is true since in both cases $h$ is the distance from a point to a curve, thus $\varphi (h)=\beta (h)=0$.

Let us consider the arc of the curve $\gamma$ from the point $Q_0$ such that $dist(O,Q_0)=dist(O,\gamma)=h$ to the point $Q_1$ on which the function $t(s)$ is increasing (precisely on such arcs the curve can be parameterized by the distance parameter $t$). Herewith, we will be proving our statement for this arc.

A) First, we will prove our lemma in two more simple cases~--~Euclidean and Lobachevsky. It is known that on the Euclidean plane $\mu_0(t)=\frac{1}{t}>0$ and on the Lobachevsky plane of the curvature $-k^2_1, k_1>0$, $\mu_0(t)=k_1\cth k_1t > 0$. Thus, in both cases for all $t$, $\mu_0(t)>0$.

Let us show that for all points on the chosen arc $f(t)\geqslant 0$. 

Indeed, assume the contrary. Then for some points $f(t)<0$. Thus, since $f(h)=0$ then, according to Lemma~\ref{ch1lem5}, there is a point $\gamma(t_0)$ on the considered arc such that at $t_0$
\begin{equation}
\label{ch1eq8}
\begin{split}
&f(t_0)<0,
\\ &f'(t_0)<0.
\end{split} 
\end{equation}

But since $\mu_0>0$, the inequalities (\ref{ch1eq8}) contradict (\ref{ch1eq7}). Therefore, on the chosen arc $f(t)\geqslant 0$. It implies that $\cos\varphi (t)\geqslant \cos \beta (t)$. The estimates for $\cos\beta (t)$ were given in Lemma~\ref{ch1lem4}. This proves the statement of Lemma~\ref{ch1prooflem1} in our cases on the chosen arc. 

Since our curve $\gamma$ is regular, it can be represented as a union of such arcs, that differ from each other only by the minimal distance $h_i$ from the point $O$ to a particular arc. Estimating $\varphi$ separately on segments of the curve where the function $t = t(s)$ is monotonous, we will obtain the estimate for the whole closed curve. Here we should notice that the right-hand members of the estimates from Lemma~\ref{ch1lem4} are monotonous with respect to $h$. It means that we indeed can estimate the angle on every arc and then pick up the minimal value $h=\min  h_i$ over all arcs. 

B) Now we will move to the most complicated 3rd case. Straightforward calculations, that were made in A),  cannot be used here since the curvature of a circle on a sphere can be positive, negative and zero.

If the curve $\gamma $ is not a circle and the point $O$ is not its center, then $h <\frac{\pi }{ 2k_1}$. Indeed, by the condition of the lemma $k \geqslant 0$. It means that the curve $\gamma $ lies in the closed hemisphere, which implies the restriction on $h$. Let us show that for all $t$ close enough to $h$, $f(t) \geqslant 0$ and does not equal to zero unless in the neighborhood of $Q_0$ the arc of the curve is an arc a circle of the curvature $k_0$. 

Indeed, if arbitrary near to $h$ there exists a value $t$ such that $f(t)<0$, then according to Lemma~\ref{ch1lem5} among these values there exists $t_0$ close enough to $h$ such that 
\begin{equation}
\label{ch1eq9}
\begin{split}
&f(t_0)<0,
\\ &f'(t_0)<0.
\end{split}
\end{equation} 

Since $h < \frac{\pi }{2k_1}$, $\mu_0(h)>0$ and the point $\gamma(t_0)$ is arbitrary near to the point $\gamma(h)$, we have that $\mu_0 (t_0)>0$. But then the inequalities~(\ref{ch1eq9}) contradict the inequality~(\ref{ch1eq7}).

Let us take the value $t_1$ close enough to $h$ such that $f(t_1)>0$. We have just shown that such exists. We consider the Cauchy problem for the following differential equation:
\begin{equation}
\label{ch1eq10}
\begin{split}
&g' + \mu_0 (t) g = 0,
\\&g(t_1) = f(t_1)>0
\end{split}
\end{equation}

 In the 3rd case $\mu_0(t)=k_1 \ctg k_1t$. Thus, the solution of~(\ref{ch1eq10}) is
$$
g(t) = \frac{f (t_1) \sin k_1t_1}{\sin k_1t}. 
$$
Note that for $0<t \leqslant \frac{\pi }{2k_1}$ the function $g(t)>0$. What is more, in the interval $\frac{\pi }{2k_1}\leqslant t < \frac{\pi }{k_1}$ this function is monotonically increasing, i.e. $g'(t)>0$.

Let us compare the solutions of the inequality~(\ref{ch1eq7}) and the equation~(\ref{ch1eq10}) with the same initial condition (see Fig.~\ref{ch1pic5}). For those values of $t$, at which $f - g < 0$,
\begin{equation}
\label{ch1eq11}
\left( f - g \right)' \geqslant -\mu_0 \left( f - g \right) > 0
\end{equation}
for $t_1\leqslant t \leqslant \frac{\pi }{ 2k_1}$ (in this interval $\mu_0(t)\geqslant 0$). Since $f(t_1) - g(t_1) = 0$ (according to~(\ref{ch1eq10})), then by Lemma~\ref{ch1lem5} among the values satisfying $f - g < 0$, there exists the value $t_2$, such that $f(t_2) - g(t_2) < 0$, $f '(t_2 ) - g'(t_2) < 0$. This contradicts the inequality~(\ref{ch1eq11}). Therefore, for $t_1 \leqslant t\leqslant \frac{\pi }{2k_1}$ we have $f \geqslant g > 0$.

\begin{figure}[h]
\begin{center}
\definecolor{uququq}{rgb}{0.25,0.25,0.25}
\begin{tikzpicture}[line cap=round,line join=round,>=triangle 45,x=1.0cm,y=1.0cm,scale=2]
\clip(-1.2,-0.6) rectangle (4.4,3.65);
\draw [->] (0,0) -- (0,3.61);
\draw [->] (0,0) -- (4,0);
\draw [shift={(1.64,2.11)},line width=1.2pt]  plot[domain=3.5:4.78,variable=\t]({1*1.43*cos(\t r)+0*1.43*sin(\t r)},{0*1.43*cos(\t r)+1*1.43*sin(\t r)});
\draw [dotted] (0,1.6)-- (0.3,1.6);
\draw [dotted] (0.3,0)-- (0.3,1.6);
\draw [dotted] (1.75,0)-- (1.75,0.68);
\draw (-0.38,0)-- (0,0);
\draw (0,-0.34)-- (0,0);
\draw (3.6,-0.05) node[anchor=north west] {${t=t(s)}$};
\draw (3.2,-0.05) node[anchor=north west] {$\frac{\pi}{k_1}$};
\draw (1.51,-0.05) node[anchor=north west] {$\frac{\pi}{2k_1}$};
\draw (0.22,-0.05) node[anchor=north west] {$t_1$};
\draw (-1.22,1.88) node[anchor=north west] {$g(t_1) = f(t_1)$};
\draw (1.77,0.45) node[anchor=north west] {$g(\frac{\pi}{2k_1})$};
\draw (1.47,1.2) node[anchor=north west] {$f(\frac{\pi}{2k_1})$};
\draw (0.18,3.54) node[anchor=north west] {$g(t)$};
\draw (2.24,2.21) node[anchor=north west] {$f(t)$};
\draw [shift={(1.61,2.44)},line width=1.2pt]  plot[domain=3.18:4.47,variable=\t]({-0.05*1.77*cos(\t r)+-1*1.1*sin(\t r)},{1*1.77*cos(\t r)+-0.05*1.1*sin(\t r)});
\draw [dash pattern=on 2pt off 2pt] (3.49,3.79)-- (3.49,-0.35);
\draw[smooth,samples=100,domain=0.1:1.7453292519943295] plot(\x,{1/(\x+0.66)^4+0.4});
\draw[smooth,samples=100,domain=1.7453292519943295:3.246658503988659] plot(\x,{1/(\x-4)^4+0.39});
\draw [color=uququq] (0,0)-- ++(-1.5pt,0 pt) -- ++(3.0pt,0 pt) ++(-1.5pt,-1.5pt) -- ++(0 pt,3.0pt);
\draw [color=uququq] (0.3,0)-- ++(-1.5pt,0 pt) -- ++(3.0pt,0 pt) ++(-1.5pt,-1.5pt) -- ++(0 pt,3.0pt);
\fill [color=uququq] (0.3,1.6) circle (1.0pt);
\draw [color=uququq] (0,1.6)-- ++(-1.5pt,0 pt) -- ++(3.0pt,0 pt) ++(-1.5pt,-1.5pt) -- ++(0 pt,3.0pt);
\draw [color=uququq] (3.49,0)-- ++(-1.5pt,0 pt) -- ++(3.0pt,0 pt) ++(-1.5pt,-1.5pt) -- ++(0 pt,3.0pt);
\draw [color=uququq] (1.75,0)-- ++(-1.5pt,0 pt) -- ++(3.0pt,0 pt) ++(-1.5pt,-1.5pt) -- ++(0 pt,3.0pt);
\fill [color=uququq] (1.75,0.68) circle (1.0pt);
\fill [color=uququq] (1.75,0.43) circle (1.0pt);
\end{tikzpicture}
\caption{}
\label{ch1pic5}
\end{center}
\end{figure}

For $t > \frac{\pi }{ 2k_1}$, $\mu_0(t)<0$, $f(\frac{\pi }{2k_1}) - g(\frac{\pi }{ 2k_1}) \geqslant 0$. Thus, from the inequality~(\ref{ch1eq11}) it follows that $f' - g' \geqslant 0$, i.e. $f' \geqslant g' > 0$. It means that for $t\geqslant \frac{\pi }{2k_1}$ on a segment of the curve $\gamma $ where $t=t(s)$ is monotonically increasing function, $f$ is monotonically increasing too. Since $f(\frac{\pi }{2k_1})\geqslant 0$, then in the biggest interval from the value $\frac{\pi }{2k_1}$ where the functions are defined, we get  $f\geqslant 0$. 

Summing up, we have shown that $\cos \varphi (t) \geqslant \cos\beta(t)$ on the chosen arc. Using the estimate for $\cos\beta(t)$ from Lemma~\ref{ch1lem4}, we obtain the statement of Lemma~\ref{ch1prooflem1} on the chosen arc. Then, applying exactly the same idea from A) about the partition of $\gamma$ into arcs for which the distance is monotonous we will get the angle estimate for the closed curve. Finally, even if $h_i > \frac{\pi }{2k_1}$ for some arcs, then $f(h_i) = 0$ and from the inequality~(\ref{ch1eq7}) it follows that $f'>0$, $f>0$ on them. 

Thereby, the 3rd case alongside with Lemma~\ref{ch1prooflem1} are proved in full generality. 

\end{proof}

Now we are ready to prove Theorems~\ref{ch1th1} and~\ref{ch1th2}.

Let us introduce a polar coordinate system with the origin at the point $O$. Then the arc length will be $ds^2 = dt^2 + g_{ij} d\theta^i d\theta^j$. Since the boundedness of the normal curvature, the hypersurface $\partial\Omega$ will be convex, embedded and compact. Moreover, in all the cases the hypersurface will lie in the domain of regularity of such coordinate system. Then we can assume that the manifold $\partial\Omega$ is the $0$-level set of the function $F(t,\theta) = t - \rho(\theta)$.

Let $\gamma$ be the integral trajectory of the vector field $Y = \grad_{\partial\Omega} \rho/|\grad_{\partial\Omega} \rho|$. Denoting a point at the distance $h$ from the point $O$ as $Q_0 \in \gamma$ we obtain $Y(Q_0)=0$, $\varphi (Q_0) = 0$. Let $P \in \gamma$ be a point at the distance $h_1$ from $O$ such that on the arc $Q_0 P\subset\gamma$ the distance function from the point $O$ to points on $\partial\Omega$ is monotonous. Then $\gamma$ can be parameterized by the distance parameter $t \in (h; h_1]$ from $O$ to points on $\partial\Omega$.

By Lemma~\ref{ch1lem1}, taking into account the remark on it,  at points on $\gamma$
\begin{equation}
\label{ch1eq12}
k_n (t) = \mu_n (t) \cos \varphi (t)  - \frac{d\varphi}{dt}\sin \varphi (t).
\end{equation}

Similarly to the two-dimensional case, let us consider a circle $S$ with the curvature $k_0$ on a two-dimensional plane with the constant Gaussian curvature (equal to $0$ in the Euclidean case, $k_1^2$ in the spherical case, $-k_1^2$ in the Lobachevsky case). Let $\overline{Q_0}$ be a point on this circle, $O_1$~---~a point at the distance $h$ from $\overline{Q_0}$ lying on the geodesic line perpendicular to $S$ at the point $\overline{Q_0}$. Let $\beta$ be the angle between the outward normal vector and the geodesic line from $O_1$ taken at a point on $S$ (see Fig.~\ref{ch1pic4}). Then, by the same Lemma~\ref{ch1lem1},
\begin{equation}
\label{ch1eq13}
k_0 =  \mu_0 (t) \cos \beta(t) -  \frac{d\beta}{dt} \sin \beta(t).
\end{equation}

Let us subtract the equation~(\ref{ch1eq13}) from~(\ref{ch1eq12})  
\begin{equation}
\label{ch1eq14}
\mu_n (t) \cos\varphi - \mu_0(t) \cos\beta -  \frac{d\varphi }{dt}\sin\varphi + \frac{d\beta }{dt}\sin\beta  = k - k_0 \geqslant 0.
\end{equation}

According to Lemma~\ref{ch1lem3} we have $\mu_n (t)\leqslant \mu_0(t)$. Then~(\ref{ch1eq14}) can be rewritten as
$$
\mu_0( \cos\varphi - \cos\beta ) + \frac{d}{dt}\left(\cos\varphi  - \cos\beta \right)\geqslant k - k_0 \geqslant 0.
$$
And this inequality coincides with the inequality~(\ref{ch1eq7}). 

Thus, the computations from the proof of Lemma~\ref{ch1prooflem1} become valid and, thereby, prove Theorems~\ref{ch1th1} and~\ref{ch1th2}.


\section{Proofs of the estimates for the width of a spherical layer}


\subsection{Auxiliary results necessary for the proof of Theorem~\ref{ch2th1}}

\begin{lemma}
\label{ch2lem1}

Let $\partial\Omega$ be a complete $k_0$-convex hypersurface in a Riemannian manifold $M^{n+1}(c)$ of the constant sectional curvature $c$, where 
\begin{enumerate}
\item for $c=0$ or $c = k_1^2$ ($k_1>0$), $k_0>0$;
\item $c=-k_1^2$ ($k_1>0$), $k_0>k_1$.
\end{enumerate}
Then $\partial\Omega$ is an embedded convex hypersurface such that at any point $P\in \partial\Omega$ there is a locally supporting for $\partial\Omega$ sphere of the radius $R$ which encloses the whole hypersurface.
\end{lemma}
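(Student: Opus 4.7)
The proof plan has two main stages.

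\textbf{Stage 1: Embeddedness and convexity.} The $k_0$-convexity assumption places a locally supporting sphere of radius $R$ at each point of $\partial\Omega$, giving a uniform form of strict local convexity. Combined with completeness and the standing hypotheses ($k_0>0$ for Euclidean and spherical, $k_0>k_1$ for hyperbolic), this forces $\partial\Omega$ to be a closed embedded hypersurface bounding a compact geodesically convex domain $\Omega$. I would invoke the Hadamard-type theorems for locally convex complete hypersurfaces in the relevant model spaces (cf.~\cite{Bor},~\cite{BorGalRev}) to formalize this. In the spherical case, the condition $k_0>0$ gives $R<\pi/(2k_1)$, so that the supporting sphere, and hence $\partial\Omega$ itself, lies inside an open hemisphere of $S^{n+1}(k_1^2)$, avoiding antipodal degeneracies.

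\textbf{Stage 2: Global enclosure.} Fix $P\in\partial\Omega$ and let $C_P$ be the center of the locally supporting sphere $S_P$ of radius $R$ at $P$, so that $C_P$ lies on the $\Omega$-side of $\partial\Omega$ at $P$ at geodesic distance $R$. The claim is $\partial\Omega\subset \overline{B(C_P,R)}$, i.e.\ $d(Q,C_P)\leq R$ for every $Q\in\partial\Omega$. I would prove this by a connectedness argument: set
\[
E_P := \{Q\in\partial\Omega : d(Q,C_P)\leq R\},
\]
which is relatively closed and contains $P$. Since $\partial\Omega$ is connected by Stage 1, it suffices to show $E_P$ is relatively open. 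At any $Q\in E_P$ with $d(Q,C_P)<R$ this is continuity. The delicate case is $d(Q,C_P)=R$, when $Q$ is a common point of $\partial\Omega$ and $\partial B(C_P,R)$. Here I would verify:
\begin{enumerate}
\item[(i)] the tangent planes to $\partial\Omega$ and to $\partial B(C_P,R)$ at $Q$ coincide;
\item[(ii)] both surfaces curve toward the same side at $Q$ (namely, toward $C_P$), using that $\overline\Omega$ is convex and $C_P\in\overline\Omega$;
\item[(iii)] all normal curvatures of $\partial\Omega$ at $Q$ are $\geq k_0$, while all normal curvatures of $\partial B(C_P,R)$ equal $\lambda(R)=k_0$, where $\lambda(R)$ is $1/R$, $k_1\ctg k_1R$, or $k_1\cth k_1R$ according to the ambient geometry.
\end{enumerate}
A standard second fundamental form comparison at $Q$ then forces $\partial\Omega$ to lie inside $\overline{B(C_P,R)}$ in a neighborhood of $Q$, giving openness.

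\textbf{Main obstacle.} The most delicate step is item~(i): ruling out transverse intersections of $\partial\Omega$ and $\partial B(C_P,R)$ at boundary points of $E_P$. I would address this by a continuity/rolling argument on the family of supporting spheres $\{S_X\}_{X\in\partial\Omega}$ parameterized continuously by $X$: starting from $X=P$, where $S_X$ locally encloses $\partial\Omega$, any first failure of global enclosure as $X$ varies along $\partial\Omega$ must occur at a tangential contact, where the curvature comparison in~(iii) precludes such a failure. This mirrors the classical Blaschke rolling theorem and its constant-curvature analogues. Item~(ii) follows from the convexity of $\overline\Omega$ once $C_P\in\overline\Omega$ is established, which in turn can be obtained from the continuity of the map $P\mapsto C_P$ together with the topological structure of $\partial\Omega$ from Stage~1.
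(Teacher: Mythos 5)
Your overall strategy (embeddedness from local convexity, then global enclosure by a rolling/connectedness argument) is sensible, but it is not what the paper does and it contains two genuine gaps. The paper does not reprove the enclosure property in the regular case at all: it simply cites Karcher's theorem on circumscribed spheres for convex hypersurfaces in constant-curvature spaces. The reason is precisely the step you yourself flag as most delicate: at a tangential contact point $Q$ with $d(Q,C_P)=R$, ``a standard second fundamental form comparison at $Q$'' does \emph{not} force $\partial\Omega$ to lie inside $\overline{B(C_P,R)}$ near $Q$. If some normal curvature of $\partial\Omega$ at $Q$ equals $k_0$, the two hypersurfaces agree to second order at $Q$ and pointwise curvature data cannot decide on which side the surface locally lies; and for $n\geq 2$ one cannot reduce to plane curves by slicing with normal $2$-planes, because away from $Q$ such a plane no longer contains the normal and the curvature of the section is not controlled by $k_n\geq k_0$. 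Making rigorous the claim that ``any first failure of enclosure occurs at a tangential contact, which is precluded by the curvature comparison'' is exactly the content of the Blaschke--Karcher rolling theorem, so as written your Stage~2 assumes the result it is meant to establish.

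Second, the lemma is stated for $k_0$-convex hypersurfaces, which the paper explicitly allows to be non-regular ($k_0$-convexity is defined via local supporting spheres, not via normal curvatures), and this generality is needed later (e.g.\ in Lemma~\ref{ch2lem2} and in the non-regular case of Theorem~\ref{ch2th1}). Your argument is phrased entirely in terms of normal curvatures and second fundamental forms, so it only covers the $C^2$ case. The paper closes this gap by approximation: it passes to outer equidistants $\partial\Omega_\tau$, which are $C^{1,1}$ and $\varepsilon(\tau)$-convex with $\varepsilon(\tau)\to k_0$, smooths them to hypersurfaces $\partial\Omega_{\tau,\delta}$ with $k_n\geq \varepsilon(\tau)-\nu(\delta)$, applies the regular case, and takes limits. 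You would need to add such an approximation step (or recast Stage~2 entirely in terms of supporting spheres) for the lemma in the stated generality.
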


\begin{remark}
As we have remarked above, here by $R$ we understand the radius of a circle of the curvature $k_0$ lying in a two-dimensional manifold $M^2(c)$ of the constant Gaussian curvature $c$.
\end{remark}

\begin{proof}

For $C^k$-smooth hypersurfaces, $k \geqslant 2$, the assertion is directly follows from~\cite{Kar}.

Let now $\partial\Omega$ be a non-smooth $k_0$-convex hypersurface. For sufficiently small $\tau$ let us consider external equidistant surfaces $\partial\Omega_\tau$ which will be $\varepsilon(\tau)$-convex with $\varepsilon(\tau) \to k_0-0$ when $\tau \to 0$ and $\varepsilon(\tau) > 0$ (for the 1st case) or $\varepsilon(\tau) > k_1$ (in the 2nd case). It is known that $\partial\Omega_\tau$ is a $C^{1,1}$-smooth hypersurface. Thus, it can be approximated with regular hypersurfaces $\partial\Omega_{\tau,\delta}$, whose normal curvatures $k_n \geqslant \varepsilon(\tau) - \nu(\delta)$ with $\nu(\delta) \to 0+0$ when $\delta \to 0$ and $\varepsilon(\tau) - \nu(\delta) > 0$ (in the 1st case) or $\varepsilon(\tau) - \nu(\delta) > k_1$ (in the 2nd).

By the already proved regular case, $\partial\Omega_{\tau,\delta}$ is enclosed by the sphere with the radius $R_{\tau,\delta}$ of the curvature $\varepsilon(\tau) - \nu(\delta)$ supporting for the surface at an arbitrary point $P_{\tau,\delta} \in \partial \Omega_{\tau,\delta}$. Taking limits as $\tau \to 0$, $\delta \to 0$ we will obtain that the sphere with the radius $R = \lim\limits_{\tau,\delta \to 0} R_{\tau,\delta}$ supporting for $\partial\Omega=\lim \limits_{\tau,\delta \to 0}\partial\Omega_{\tau,\delta}$ at the point $P = \lim \limits_{\tau,\delta \to 0}P_{\tau,\delta}$, $P \in \partial\Omega$ will enclose the hypersurface $\partial\Omega$. This holds for an arbitrary point $P$. Thus, the lemma in the non-regular case is proved. 

\end{proof}

\begin{figure}[h]
\begin{center}
\definecolor{uququq}{rgb}{0.25,0.25,0.25}
\begin{tikzpicture}[line cap=round,line join=round,>=triangle 45,x=1.0cm,y=1.0cm,scale=1.5]
\clip(-3.3,-1.5) rectangle (2.9,1.3);
\draw [shift={(2.56,-3.17)}] plot[domain=2.39:2.82,variable=\t]({1*5.97*cos(\t r)+0*5.97*sin(\t r)},{0*5.97*cos(\t r)+1*5.97*sin(\t r)});
\draw [shift={(7.11,-3.23)}] plot[domain=2.39:2.82,variable=\t]({1*5.97*cos(\t r)+0*5.97*sin(\t r)},{0*5.97*cos(\t r)+1*5.97*sin(\t r)});
\draw [shift={(0.31,-9.52)}] plot[domain=1.34:1.77,variable=\t]({1*10.63*cos(\t r)+0*10.63*sin(\t r)},{0*10.63*cos(\t r)+1*10.63*sin(\t r)});
\draw [shift={(-0.97,-11.68)}] plot[domain=1.34:1.77,variable=\t]({1*10.63*cos(\t r)+0*10.63*sin(\t r)},{0*10.63*cos(\t r)+1*10.63*sin(\t r)});
\draw [shift={(-0.58,-0.34)}] plot[domain=-4.15:0.45,variable=\t]({1*1.12*cos(\t r)+-0.03*0.58*sin(\t r)},{0.03*1.12*cos(\t r)+1*0.58*sin(\t r)});
\draw [shift={(-0.58,-0.34)},line width=1.6pt]  plot[domain=0.45:2.14,variable=\t]({1*1.12*cos(\t r)+-0.03*0.58*sin(\t r)},{0.03*1.12*cos(\t r)+1*0.58*sin(\t r)});
\draw [shift={(-0.19,0.41)}] plot[domain=-4.15:0.45,variable=\t]({-1*1.12*cos(\t r)+0.03*0.58*sin(\t r)},{-0.03*1.12*cos(\t r)+-1*0.58*sin(\t r)});
\draw [shift={(-0.19,0.41)},line width=1.6pt]  plot[domain=0.45:2.14,variable=\t]({-1*1.12*cos(\t r)+0.03*0.58*sin(\t r)},{-0.03*1.12*cos(\t r)+-1*0.58*sin(\t r)});
\draw (-2.69,-0.54) node[anchor=north west] {$M^2(c)$};
\draw [->,line width=0.4pt] (-0.58,-0.34) -- (-1.51,-0.67);
\fill [color=black] (-1.19,0.13) circle (1.0pt);
\draw[color=black] (-1.43,0.23) node {$A$};
\fill [color=black] (0.43,-0.06) circle (1.0pt);
\draw[color=black] (0.69,-0.09) node {$B$};
\fill [color=black] (-0.58,-0.34) circle (1.0pt);
\fill [color=black] (-0.19,0.41) circle (1.0pt);
\draw[color=black] (-1.33,-0.41) node {$R$};
\end{tikzpicture}
\caption{}
\label{ch2pic1}
\end{center}
\end{figure}

Further we will need the following observation. Let $A$ and $B$ be two arbitrary points in $\Omega \subset M^{n+1} (c)$, where $\Omega$ is the domain, bounded by $\partial\Omega$. Consider an arbitrary totally geodesic two-dimensional submanifold $M^2 (c)$ in $M^{n+1} (c)$ passing through $A$ and $B$ (see Fig.~\ref{ch2pic1}). In general situation, in $M^2(c)$ there are precisely two circles of the radius $R$ passing through $A$ and $B$. Both of these circles are divided by these points into two arcs~--~smaller and bigger. Hereafter, we will call a smaller arc of the circle of the radius $R$ passing trough the points $A$ and $B$ as \textit{a smaller circular arc of the radius $R$ for points $A$ and $B$}.        

The following lemma holds.

\begin{lemma}

\label{ch2lem2}

Let $\partial\Omega$ be a complete $k_0$-convex hypersurface in a complete simply connected Riemannian manifold $M^{n+1} (c)$ of the constant sectional curvature $c$ (for $c=0$ or $c=k_1^2$, $k_0>0$; for  $c = -k_1^2$, $k_0 > k_1>0$); $\Omega$ is the domain enclosed by the hypersurface. Then any smaller circular arc of the radius $R$ passing through any two points $A,B\in\Omega$ lies in $\Omega$.

\end{lemma}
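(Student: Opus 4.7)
The plan is to deduce Lemma~\ref{ch2lem2} from Lemma~\ref{ch2lem1} in two steps: first identify $\Omega$ with the intersection of all its supporting balls of radius $R$ furnished by Lemma~\ref{ch2lem1}, and then show that any smaller circular arc of radius $R$ through two points of a ball of radius $R$ lies entirely in that ball.

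\emph{Step 1 (characterization of $\Omega$).} For each $P\in\partial\Omega$ let $\bar B_P$ denote the closed ball of radius $R$ supplied by Lemma~\ref{ch2lem1}, so that $\Omega\subset\bar B_P$ and $P\in\partial\bar B_P$. The inclusion $\Omega\subset\bigcap_P\bar B_P$ is immediate. For the converse, take $Q\notin\Omega$ and a nearest point $P_0\in\partial\Omega$ to $Q$. The minimizing geodesic from $Q$ to $P_0$ leaves $P_0$ along the outward unit normal to $\partial\Omega$, while the center $O_{P_0}$ of $\bar B_{P_0}$ lies on the inward unit normal at distance $R$. In $M^{n+1}(c)$ these two half-geodesics form a single geodesic through $P_0$, so
\[
d(O_{P_0},Q)=R+d(P_0,Q)>R,
\]
and hence $Q\notin\bar B_{P_0}$. (In the spherical case this equality reflects the true distance, because the size restriction on $\Omega$ keeps $R+d(P_0,Q)$ below $\pi/k_1$.) Thus $\Omega=\bigcap_{P\in\partial\Omega}\bar B_P$.

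\emph{Step 2 (arc inside an $R$-ball).} I claim: for any ball $\bar B_R(O)\subset M^{n+1}(c)$ containing two points $A,B$ and any totally geodesic $2$-plane $\Sigma\ni A,B$, every smaller arc $\gamma$ of radius $R$ through $A,B$ in $\Sigma$ is contained in $\bar B_R(O)$. Dropping the perpendicular geodesic from $O$ to $\Sigma$ and using the Pythagorean identity of $M^{n+1}(c)$,
\[
\bar B_R(O)\cap\Sigma=\bar D_{R'}(O_\Sigma)\subset\Sigma,\qquad R'\leqslant R,
\]
where $O_\Sigma$ is the foot of the perpendicular. The claim reduces to the following $2$-dimensional assertion: if $A,B\in\bar D_{R'}(O_\Sigma)$ with $R'\leqslant R$, then every smaller arc of radius $R$ through $A,B$ lies in $\bar D_{R'}(O_\Sigma)$.

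For the $2$D assertion, let $O_\gamma\in\Sigma$ be the center of the circle of radius $R$ containing $\gamma$; it lies on the perpendicular bisector of $AB$ at distance $h_\gamma$ from the midpoint of $AB$ determined by Pythagoras in the plane of constant Gaussian curvature $c$. By the law of cosines at $O_\gamma$, the function $X\mapsto d(X,O_\Sigma)$ on the circle has exactly two critical points—antipodes on the geodesic through $O_\Sigma$ and $O_\gamma$—the maximum $X_{\max}$ lying on the far side of $O_\gamma$ from $O_\Sigma$. The hypothesis $|O_\Sigma A|,|O_\Sigma B|\leqslant R'$ together with $R'\leqslant R$, combined with the Pythagorean identity in $\Sigma$ applied to the perpendicular bisector of $AB$, forces the component of $O_\Sigma$ along this bisector to be at most $h_\gamma$ from the midpoint of $AB$ in absolute value. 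Consequently $X_{\max}$ lies on the opposite side of the chord $AB$ from $\gamma$, that is, on the \emph{bigger} arc. Hence $d(\cdot,O_\Sigma)$ attains its maximum over $\gamma$ at an endpoint $A$ or $B$, where the value is $\leqslant R'$, giving $\gamma\subset\bar D_{R'}(O_\Sigma)\subset\bar B_R(O)$.

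\emph{Conclusion.} Given $A,B\in\Omega$, each supporting ball $\bar B_P$ contains $A$ and $B$, so by Step 2 the smaller arc $\gamma$ lies in $\bar B_P$ for every $P\in\partial\Omega$. By Step 1, $\gamma\subset\bigcap_P\bar B_P=\Omega$, which is the desired conclusion.

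The main obstacle is the $2$-dimensional claim inside Step 2: the law-of-cosines argument locating $X_{\max}$ on the bigger arc must be executed separately via Pythagoras in each of the three constant-curvature geometries (Euclidean, spherical, hyperbolic), with care taken in the degenerate boundary case $O_\Sigma=O_\gamma$, which forces $R'=R$ and puts $\gamma$ on the boundary circle itself.
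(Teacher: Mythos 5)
Your argument is correct in substance but follows a genuinely different route from the paper. The paper argues by contradiction inside the two-dimensional totally geodesic section through $A$, $B$ and the offending arc $\omega$: it takes a point $C_1$ on the piece of $\partial\Omega$ cut off by $\omega$, invokes the enclosing supporting circle $\delta$ of radius $R$ at $C_1$ from Lemma~\ref{ch2lem1}, and derives a contradiction from the existence of two distinct smaller circular arcs of radius $R$ through $X_1,Y_1$ lying on the same side of the geodesic $X_1Y_1$. You instead decompose the statement into two cleanly separated facts: the representation $\Omega=\bigcap_{P}\bar B_P$ as an intersection of the enclosing balls of radius $R$ from Lemma~\ref{ch2lem1}, and the purely metric fact that a ball of radius $R$ in $M^{n+1}(c)$ is closed under taking smaller $R$-arcs between its points (i.e.\ is ``$R$-convex''). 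Your two-dimensional claim in Step 2 checks out in all three geometries: with $\alpha$ the angle at $O_\gamma$ between $O_\gamma O_\Sigma$ and $O_\gamma A$, the law of cosines together with $d(O_\Sigma,A)\leqslant R'\leqslant R$ forces $\cos\alpha>0$ (using $\cos k_1R>0$, i.e.\ $R<\pi/2k_1$, in the spherical case), so both endpoints sit in the open half-circle facing $O_\Sigma$ and the farthest point $X_{\max}$, at $\alpha=\pi$, cannot lie on the minor arc. Your approach buys modularity and a computation one can actually verify, whereas the paper's avoids the global intersection characterization at the cost of some hand-waving about how $\delta$ meets $\omega_1$. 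Two caveats you should address: in Step 1 the converse inclusion uses that the supporting sphere at the nearest point $P_0$ to $Q$ is the one tangent to the hyperplane perpendicular to the minimizing geodesic $QP_0$, while Lemma~\ref{ch2lem1} only asserts the existence of \emph{some} enclosing supporting sphere at each point; for non-regular $\partial\Omega$ (which the lemma permits) this needs either the approximation device the paper uses elsewhere or the observation that one gets an enclosing sphere for every direction in the normal cone. And in the spherical case the identity $d(O_{P_0},Q)=R+d(P_0,Q)$ should be restricted to points $Q$ near $\Omega$ (which suffices, since the arcs in question stay within distance $2R<\pi/k_1$ of $\Omega$).
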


\begin{proof}

Let us assume the contrary, implying that there are $A,B\in\Omega$ and some smaller circular arc $\omega$ for the points $A$ and $B$ which does not lie entirely in the domain $\Omega$. 

Consider the cross-section of $\Omega$ by the two-dimensional subspace $M^2 (c)$ that contains $A,B$, and $\omega$. Denote as $\gamma := M^2 (c) \cap \partial\Omega$ the curve in this section (see. Fig.~\ref{ch2pic2}).           

\begin{figure}[h]
\begin{center}
\definecolor{uququq}{rgb}{0.25,0.25,0.25}
\begin{tikzpicture}[line cap=round,line join=round,>=triangle 45,x=1.0cm,y=1.0cm,scale=2.5]
\clip(-2.4,-1.5) rectangle (1.1,1.4);
\draw [rotate around={41.24:(-0.63,-0.23)}] (-0.63,-0.23) ellipse (1.4cm and 0.98cm);
\draw [shift={(-0.34,0.29)}] plot[domain=-1.28:1.52,variable=\t]({0.1*0.94*cos(\t r)+-1*0.77*sin(\t r)},{1*0.94*cos(\t r)+0.1*0.77*sin(\t r)});
\draw (-1.03,0.69)-- (0.35,0.77);
\draw [shift={(0.06,-1.69)},line width=1.2pt]  plot[domain=1.51:1.98,variable=\t]({1*2.66*cos(\t r)+0*2.66*sin(\t r)},{0*2.66*cos(\t r)+1*2.66*sin(\t r)});
\draw [shift={(-0.63,-0.23)},line width=1.2pt]  plot[domain=0.11:1.35,variable=\t]({0.75*1.4*cos(\t r)+-0.66*0.98*sin(\t r)},{0.66*1.4*cos(\t r)+0.75*0.98*sin(\t r)});
\draw [shift={(-0.34,0.29)},line width=1.2pt]  plot[domain=-0.95:1.21,variable=\t]({0.1*0.94*cos(\t r)+-1*0.77*sin(\t r)},{1*0.94*cos(\t r)+0.1*0.77*sin(\t r)});
\draw [shift={(0.06,-1.69)}] plot[domain=1.98:2.62,variable=\t]({1*2.66*cos(\t r)+0*2.66*sin(\t r)},{0*2.66*cos(\t r)+1*2.66*sin(\t r)});
\draw [shift={(0.06,-1.69)}] plot[domain=1.19:1.51,variable=\t]({1*2.66*cos(\t r)+0*2.66*sin(\t r)},{0*2.66*cos(\t r)+1*2.66*sin(\t r)});
\draw (-2.15,0.25) node[anchor=north west] {$\delta$};
\draw (0.19,0.72) node[anchor=north west] {$\omega$};
\draw (-0.03,0.92) node[anchor=north west] {$\gamma_1$};
\draw (-0.09,1.18) node[anchor=north west] {$\delta_1$};
\draw[color=black] (-1.6,-0.24) node {$\gamma$};
\fill [color=uququq] (-1.1,0.41) circle (0.5pt);
\draw[color=uququq] (-1.2,0.33) node {$A$};
\fill [color=uququq] (0.43,0.48) circle (0.5pt);
\draw[color=uququq] (0.33,0.41) node {$B$};
\draw[color=black] (-0.09,1.33) node {$\omega_1$};
\fill [color=uququq] (-1.03,0.69) circle (0.5pt);
\draw[color=uququq] (-0.95,0.58) node {$A_1$};
\fill [color=uququq] (0.35,0.77) circle (0.5pt);
\draw[color=uququq] (0.46,0.82) node {$B_1$};
\fill [color=uququq] (-0.44,0.92) circle (0.5pt);
\draw[color=uququq] (-0.47,1.04) node {$C_1$};
\fill [color=uququq] (-0.99,0.75) circle (0.5pt);
\draw[color=uququq] (-1.07,0.9) node {$X_1$};
\fill [color=uququq] (0.22,0.97) circle (0.5pt);
\draw[color=uququq] (0.3,1.06) node {$Y_1$};
\end{tikzpicture}
\caption{}
\label{ch2pic2}
\end{center}
\end{figure}

It is known that if intersect a $\lambda$-convex hypersurface with any two-dimentional totally-geodesic subspace, then we obtain the $\lambda$-convex curve. Thus, $\gamma$ is $k_0$-convex.

Let $A_1, B_1$ be the intersection points of $\omega$ and $\gamma$. If to denote the part of  $\omega$ that lies between $A_1$ and $B_1$ as $\omega_1$, and the part of $\gamma$ bounded by $\omega$ and the chord $AB$ as $\gamma_1$, then $\omega_1$ and $\gamma_1$ will be convex curves lying on the same side from the geodesic that connects $A_1$ and $B_1$. 

Now, let $C_1$ be an arbitrary point on $\gamma_1$ distinct from $A_1$ and $B_1$. Since $\gamma$ is a $k_0$-convex closed curve, by Lemma~\ref{ch2lem1} the circle $\delta$ of the radius $R$ locally supporting for $\gamma$ at the point $C_1$ encloses $\gamma$. Suppose that $\delta$ intersects $\omega_1$ in two points $X_1$ and $Y_1$. If $X_1$ or $Y_1$ coincides with $A_1$ or $B_1$, then since the arbitrariness of the choice of $C_1$ we get $\omega_1 \equiv \gamma_1$, which contradicts the fact that $\omega_1$ does not lie in $\Omega$. Thus, $X_1 \ne A_1$, $Y_1 \ne B_1$.

But then, since $\omega_1$ is a smaller circular arc of the radius $R$, the arc $\delta_1$ of the circle $\delta$, bounded by $\omega_1$ and $A_1B_1$, is less than a half of the circle $\delta$. And taking into account the convexity argument, $\delta_1$ and $\omega_1$ lie on the same side with respect to the geodesic $X_1 Y_1$. 

Thereby, we obtain that for two given points $X_1$ and $Y_1$ there exist two distinct smaller circular arcs of the radius $R$ that lie on the same side from the geodesic $X_1 Y_1$, which is impossible. From this the assertion of Lemma~\ref{ch2lem2} follows.          

\end{proof}

\subsection{Estimates for the class of spindle-shaped hypersurfaces}

In this section we will build the key object for estimating the width of a spherical layer.

Similarly to the above, let us consider a complete simply connected Riemannian manifold $M^{n+1}(c)$ of the constant sectional curvature $c$ and let us denote by $R$ the radius of the circle of the curvature $k_0$ in a two-dimensional manifold $M^2(c)$.
 
For fixed points $P$, $Q\in M^{n+1}(c)$ let us consider the special class of \textit{spindle-shaped hypersurfaces} $v(P,Q)$ which are obtained by rotating the smaller circular arc of the radius $R$ for the points $P$ and $Q$ around the geodesic $l$ passing through these points (see Fig.~\ref{ch2pic3}).

\begin{figure}[h]
\begin{center}
\definecolor{uququq}{rgb}{0.25,0.25,0.25}
\begin{tikzpicture}[line cap=round,line join=round,>=triangle 45,x=1.0cm,y=1.0cm,scale=2]
\clip(-1.7,-1.95) rectangle (2.5,1.55);
\draw [shift={(0.38,-1.49)},line width=1.2pt]  plot[domain=0.69:2.45,variable=\t]({1*2.05*cos(\t r)+0*2.05*sin(\t r)},{0*2.05*cos(\t r)+1*2.05*sin(\t r)});
\draw [shift={(0.38,1.11)},line width=1.2pt]  plot[domain=3.83:5.6,variable=\t]({1*2.05*cos(\t r)+0*2.05*sin(\t r)},{0*2.05*cos(\t r)+1*2.05*sin(\t r)});
\draw [shift={(0.38,-0.19)}] plot[domain=-3.14:0,variable=\t]({0*0.75*cos(\t r)+1*0.23*sin(\t r)},{-1*0.75*cos(\t r)+0*0.23*sin(\t r)});
\draw [shift={(0.38,-0.19)},dash pattern=on 1pt off 1pt]  plot[domain=0:pi,variable=\t]({0*0.75*cos(\t r)+1*0.23*sin(\t r)},{-1*0.75*cos(\t r)+0*0.23*sin(\t r)});
\draw [line width=0.4pt,dotted,fill=black,fill opacity=0.1] (0.38,-0.19) circle (0.75cm);
\draw [shift={(0.38,-0.19)},line width=0.4pt]  plot[domain=-3.14:0,variable=\t]({1*0.75*cos(\t r)+0*0.29*sin(\t r)},{0*0.75*cos(\t r)+1*0.29*sin(\t r)});
\draw [shift={(0.38,-0.19)},line width=0.4pt,dash pattern=on 1pt off 1pt]  plot[domain=0:pi,variable=\t]({1*0.75*cos(\t r)+0*0.29*sin(\t r)},{0*0.75*cos(\t r)+1*0.29*sin(\t r)});
\draw [shift={(0.38,4.07)},line width=1.2pt]  plot[domain=4.36:5.07,variable=\t]({1*4.55*cos(\t r)+0*4.55*sin(\t r)},{0*4.55*cos(\t r)+1*4.55*sin(\t r)});
\draw [shift={(0.38,-4.46)},dash pattern=on 1pt off 1pt]  plot[domain=1.22:1.93,variable=\t]({1*4.55*cos(\t r)+0*4.55*sin(\t r)},{0*4.55*cos(\t r)+1*4.55*sin(\t r)});
\draw [line width=0.4pt,dotted,fill=black,fill opacity=0.05] (0.38,-0.19) circle (1.58cm);
\draw [->] (0.38,-0.19) -- (0.95,0.29);
\draw [->] (0.95,0.29) -- (1.59,0.83);
\draw (-0.94,0.74) node[anchor=north west] {$v(P,Q)$};
\draw [line width=0.4pt] (-1.6,-0.19)-- (2.39,-0.19);
\draw (2.25,0.13) node[anchor=north west] {${l}$};
\fill [color=uququq] (-1.2,-0.19) circle (0.5pt);
\draw[color=uququq] (-1.29,-0.35) node {$Q$};
\fill [color=uququq] (1.96,-0.19) circle (0.5pt);
\draw[color=uququq] (2.05,-0.35) node {$P$};
\fill [color=uququq] (0.38,-0.19) circle (0.5pt);
\draw[color=uququq] (0.32,-0.35) node {$O$};
\draw[color=black] (-0.08,0.3) node {$S$};
\draw[color=black] (-0.31,1.1) node {$S_1$};
\draw[color=black] (0.72,0.27) node {$r$};
\draw[color=black] (1.37,0.81) node {$\rho$};
\end{tikzpicture}
\caption{}
\label{ch2pic3}
\end{center}
\end{figure}

Note that $v(P,Q)$ is a $k_0$-convex surface of revolution. For any two-dimensional plane $M^2(c)$ which contains the points $P$ and $Q$ we have that $M^2(c)\cap v(P,Q)$ is the curve $\gamma$ composed of the two symmetric with respect to $PQ$ smaller circular arcs of the radius $R$. Hereby, we will call such curves as \textit{lunes}.     

Let $O\in l$ be a point equidistant from $P$ and $Q$. Since $v(P,Q)$ is obtained by rotating a circular arc, the point $O$ is the center of the ball $S$ inscribe into $v(P,Q)$. Let $r$ be the radius of $S$. Then $\omega := M^2(c)\cap S$ will be a circle with the center $O$ and radius $r$ inscribed in $\gamma$.  
  
Due to the fact that $v(P,Q)$ was build by rotating the smaller circular arc, the sphere $S_1$ with the center $O$ and of the radius $\rho:=|OP|=|OQ|$ will be the circumscribe sphere for $v(P,Q)$. Here $|OP|$ and $|OQ|$ are the lengths of the corresponding geodesic segments.

It is obvious that with the given radius of the inscribe sphere and fixed $R$ we can uniquely reconstruct the points $P$ and $Q$, and thus the whole hypersurface $v(P,Q)$ (since with the given radius $R$ of a circle and the height $r$ of a circular segment one can uniquely rebuild the corresponding circular arc and its endpoints).

Thereby, we can consider the class of spindle-shaped hypersurfaces parameterized by the value of $r$. Note that $r\in[0, R]$.

Then, by construction and since $\rho$ is uniquely defined by $r$ either, every spindle-shaped hypersurface can be enclosed in the spherical layer of the width $d=d(r)=\rho(r)-r$.

It appears that the following lemma holds.

\begin{lemma}
\label{ch2lem3}
For the class of spindle-shaped hypersurfaces in the manifolds of the constant sectional curvature the following estimates for the width $d=d(r)$ of a spherical layer holds:
\begin{enumerate}
\item 
For the Euclidean space
$$
d=d(r)\leqslant d_0 = \frac{\sqrt{2} - 1}{k_0}
$$
and the equality is attained with $$r=r_0=\frac{1}{k_0(2+\sqrt{2})}.$$

\item 
For the spherical space $S^{n+1}(k_1^2)$ of the curvature $k_1^2$
$$
d=d(r) \leqslant d_0 = \frac{2}{k_1}\arccos\sqrt{\cos k_1R} - R
$$
and the equality is attained with $$r=r_0=R-\frac{1}{k_1}\arccos\sqrt{\cos k_1R}.$$

\item For the Lobachevsky space $\mathbb H^{n+1}(-k_1^2)$ of the curvature $-k_1^2$
$$
d=d(r)\leqslant d_0 = \frac{2}{k_1}\arcch\sqrt{\ch k_1R} - R
$$
and the equality is attained with $$r=r_0=R-\frac{1}{k_1}\arcch\sqrt{\ch k_1R}.$$
\end{enumerate}
\end{lemma}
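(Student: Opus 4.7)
The plan is to reduce the problem to two dimensions by slicing the spindle-shaped hypersurface $v(P,Q)$ with a totally geodesic plane $M^2(c)$ containing the axis $l$. The trace is a lune bounded by two smaller circular arcs of radius $R$, symmetric about $l$. Fixing one arc, let $O'$ be its center and $O$ the midpoint of $PQ$ on $l$. Then $O'$ lies on the perpendicular to $l$ at $O$, while the apex of the arc lies on the same perpendicular but on the opposite side of $l$ from $O'$, at distance $r$ from $O$. Since the apex belongs to the arc, $|OO'| = R - r$. Consequently, the triangle $OO'P$ has a right angle at $O$, hypotenuse $|O'P| = R$, and legs $|OO'| = R - r$ and $|OP| = \rho$.

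Next I would write $\rho$ as an explicit function of $r$ by invoking the Pythagorean theorem appropriate to the constant curvature $c$: in the Euclidean case $\rho(r)^2 = R^2 - (R-r)^2 = r(2R - r)$; in the spherical case $\cos(k_1 R) = \cos(k_1 (R-r))\cos(k_1\rho)$; in the hyperbolic case $\ch(k_1 R) = \ch(k_1(R-r))\ch(k_1\rho)$. In case (2) one has $\cos(k_1 R) > 0$ because $R = \frac{1}{k_1}\arcctg(k_0/k_1) < \pi/(2k_1)$, so all three formulas are well-posed for $r \in [0, R]$.

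I would then maximize $d(r) := \rho(r) - r$ on $[0, R]$. Direct substitution gives $d(0) = d(R) = 0$, so the maximum is attained at an interior critical point $r_0$, characterized by $\rho'(r_0) = 1$. In the Euclidean case this reduces to the quadratic $2r^2 - 4Rr + R^2 = 0$, whose admissible root is $r_0 = \frac{(2-\sqrt 2)R}{2}$; then $\rho_0 = R/\sqrt 2$ and $d_0 = (\sqrt 2 - 1)R = (\sqrt 2 - 1)/k_0$, matching the stated $r_0 = 1/(k_0(2+\sqrt 2))$ after rationalization. Implicit differentiation in the spherical (resp. hyperbolic) Pythagorean identity, using $\sin^2 = 1-\cos^2$ (resp. $\sh^2 = \ch^2 - 1$) to eliminate $\sin(k_1\rho)$ (resp. $\sh(k_1\rho)$), collapses the critical equation after cancellation to the clean form $\cos^2(k_1(R-r_0)) = \cos(k_1 R)$ (resp. $\ch^2(k_1(R-r_0)) = \ch(k_1 R)$). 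These give $r_0 = R - \frac{1}{k_1}\arccos\sqrt{\cos k_1 R}$ and $\rho_0 = \frac{1}{k_1}\arccos\sqrt{\cos k_1 R}$ (resp. with $\arcch\sqrt{\ch k_1 R}$), so that $d_0 = \rho_0 - r_0$ equals the right-hand side of the claim.

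The main obstacle is the algebraic bookkeeping in the non-Euclidean cases: the derivative $\rho'(r)$ mixes products and quotients of trigonometric or hyperbolic functions, and the critical equation attains its elegant form only after careful cancellation and use of the fundamental identities. A secondary issue is confirming that the interior critical point really is the global maximum, but this is immediate from the boundary values $d(0)=d(R)=0$ together with the evident positivity $d(r) > 0$ for $r\in(0,R)$, since the inscribed ball of radius $r$ lies strictly inside the circumscribed sphere of radius $\rho > r$ whenever the arc is not degenerate.
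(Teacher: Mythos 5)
Your proposal is correct and follows essentially the same route as the paper: the same reduction to the planar lune, the same right triangle with legs $R-r$, $\rho$ and hypotenuse $R$ giving $\rho(r)$ via the appropriate Pythagorean identity, and the same optimization of $d(r)=\rho(r)-r$ on $[0,R]$ with zero boundary values. The only cosmetic difference is that you locate the critical point by implicit differentiation ($\rho'(r_0)=1$, which immediately yields $\rho_0=R-r_0$ and hence $\cos k_1R=\cos^2k_1(R-r_0)$), whereas the paper maximizes $\tg k_1 d(r)$ and cancels its way to the same equation; your version is, if anything, slightly cleaner.
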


\begin{proof}

In order to prove the assertion we will calculate directly the value $d(r)=\rho(r)-r$ and its extremum.

Let us consider an arbitrary two-dimensional totally geodesic submanifold $M^2(c)$ passing through the points $P$ and $Q$. As above $\gamma := M^2(c) \cap v(P,Q)$.

Let $O_1$ be the center of one of the circles of the radius $R$ which contains one of the smaller circular arc for the points $P$ and $Q$; $A\in\gamma$ is the intersection point of the geodesic $O_1O$ with $\gamma$ that does not lie between $O$ and $O_1$ (see Fig.~\ref{ch2pic4}).

\begin{figure}[h]
\begin{center}
\definecolor{uququq}{rgb}{0.25,0.25,0.25}
\begin{tikzpicture}[line cap=round,line join=round,>=triangle 45,x=1.0cm,y=1.0cm,scale=2]
\clip(-2.6,-2) rectangle (1.3,1.3);
\draw (-2.16,-0.35)-- (0.83,-0.35);
\draw [shift={(-0.67,-1.59)},dash pattern=on 1pt off 1pt]  plot[domain=2.45:2.89,variable=\t]({1*1.94*cos(\t r)+0*1.94*sin(\t r)},{0*1.94*cos(\t r)+1*1.94*sin(\t r)});
\draw [shift={(-0.67,-1.59)},dash pattern=on 1pt off 1pt]  plot[domain=0.25:0.69,variable=\t]({1*1.94*cos(\t r)+0*1.94*sin(\t r)},{0*1.94*cos(\t r)+1*1.94*sin(\t r)});
\draw [shift={(-0.67,-1.59)},line width=1.2pt]  plot[domain=0.69:2.45,variable=\t]({1*1.94*cos(\t r)+0*1.94*sin(\t r)},{0*1.94*cos(\t r)+1*1.94*sin(\t r)});
\draw [shift={(-0.67,0.88)},line width=1.2pt]  plot[domain=3.83:5.59,variable=\t]({1*1.94*cos(\t r)+0*1.94*sin(\t r)},{0*1.94*cos(\t r)+1*1.94*sin(\t r)});
\draw(-0.67,-0.35) circle (1.5cm);
\draw(-0.67,-0.35) circle (0.7cm);
\draw (-0.67,0.35)-- (-0.67,-1.59);
\draw (-0.67,-1.59)-- (0.83,-0.35);
\draw (-0.68,0.12) node[anchor=north west] {$r$};
\draw (-0.7,-0.55) node[anchor=north west] {$R-r$};
\draw (-0.05,-1.05) node[anchor=north west] {$R$};
\draw (-1.79,-0.78) node[anchor=north west] {$\gamma$};
\draw (0.14,-0.09) node[anchor=north west] {$\rho$};
\fill [color=uququq] (-2.16,-0.35) circle (0.5pt);
\draw[color=uququq] (-2.29,-0.24) node {$Q$};
\fill [color=uququq] (0.83,-0.35) circle (0.5pt);
\draw[color=uququq] (0.97,-0.24) node {$P$};
\fill [color=uququq] (-0.67,-0.35) circle (0.5pt);
\draw[color=uququq] (-0.79,-0.48) node {$O$};
\fill [color=uququq] (-0.67,-1.59) circle (0.5pt);
\draw[color=uququq] (-0.75,-1.7) node {$O_1$};
\fill [color=uququq] (-0.67,0.35) circle (0.5pt);
\draw[color=uququq] (-0.76,0.52) node {$A$};
\draw[color=black] (-1.26,0.82) node {$\omega_1$};
\draw[color=black] (-1.05,0.06) node {$\omega$};
\end{tikzpicture}
\caption{}
\label{ch2pic4}
\end{center}
\end{figure}

Since $O$ is the midpoint of $PQ$, the geodesic $O_1O$ is perpendicular to the geodesic $PQ$ and the point $A$ is the tangency point of $\omega$ and $\gamma$.

By construction, $AO=r$, $OQ=\rho$, $OO_1 = R-r$, $O_1Q=R$, $\angle QOO_1 = \frac{\pi}{2}$. As we have already noticed  
\begin{equation}
\label{ch2eq4}
0 \leqslant r \leqslant R.
\end{equation} 

Let us obtain the estimates for the possible ambient spaces.

1) For $c=0$, $M^2(c)=\mathbb E^2$, $R=\frac{1}{k_0}$. From the right triangle $\triangle O_1OQ$:
 \begin{equation}
\label{ch2eq5}
\rho=\rho(r)=\sqrt{\frac{2r}{k_0}-r^2}.
\end{equation} 

Thus,
\begin{equation}
\label{ch2eq6}
d(r)=\rho(r) - r = \sqrt{\frac{2r}{k_0}-r^2} - r.
\end{equation}

Taking into account~(\ref{ch2eq4}), let us find the maximum of the function $d(r)$ in the interval $\left[0,\frac{1}{k_0}\right]$: $d(0)=0$, $d(\frac{1}{k_0})=0$, $d\geqslant 0$. Which means that the maximum is attained in $(0, \frac{1}{k_0})$. It is easy to check that $r_0=\frac{1}{k_0(2+\sqrt{2})}<\frac{1}{k_0}$ is the maximum point and 
\begin{equation*}
\begin{split}
d_0&=\max\limits_{r\in\left[0,\frac{1}{k_0}\right]} d(r)=d(r_0)=\frac{\sqrt{2}-1}{k_0}.
\end{split}
\end{equation*}

Therefore, for all $r$ we have $d(r)=\rho(r)-r \leqslant d_0 = \frac{\sqrt{2} - 1}{k_0}$, which proves the estimate~(\ref{ch2eq1}) and the equality case. 

2) For $c=k_1^2$, $k_1>0$, $M^2(c)=S^2(k_1^2)$, $R=\frac{1}{k_1}\arccos \frac{k_0}{k_1}$. From the right triangle $\triangle O_1OQ$ on the sphere: $\cos k_1 |O_1Q| = \cos k_1| OQ| \cos k_1 |OO_1|$, where $|\cdot|$ means the length of the corresponding geodesic segment on the sphere. Thus, in our notations
\begin{equation}
\label{ch2eq9}
\rho=\rho(r)=\frac{1}{k_1}\arccos \frac{\cos k_1R}{\cos k_1(R-r)}.
\end{equation}
   
Thereby, 
\begin{equation}
\label{ch2eq10}
d(r)=\rho(r)-r=\frac{1}{k_1}\arccos \frac{\cos k_1R}{\cos k_1(R-r)}-r.
\end{equation}

Let us find the maximum of this function in the interval $[0,R]$. $d(0)=0$, $d(R)=\frac{1}{k_0}k_0R - R =0$, $d\geqslant 0$. Thus, similarly to 1), the maximum point lies in $(0,R)$.

The function $d(r)$ attains its maximum simultaneously with the function $\tg k_1 d(r)$. Let us compute the tangent from the both sides in ~(\ref{ch2eq10}). For doing this, we introduce the following notations:
\begin{equation}
\label{ch2eq11}
f:=\sqrt{\cos^2 k_1(R-r)-\cos^2k_1R},
\end{equation}
\begin{equation}
\label{ch2eq12}
\alpha:=\arccos \frac{\cos k_1R}{\cos k_1(R-r)}.
\end{equation}
From~(\ref{ch2eq12})
\begin{equation}
\label{ch2eq14}
\tg \alpha = \frac{f}{\cos k_1 R},
\end{equation}
\begin{equation}
\label{ch2eq15}
\beta := k_1r, \, \, \tg \beta = \tg k_1r.
\end{equation}
Using~(\ref{ch2eq10}), (\ref{ch2eq11}), (\ref{ch2eq14}) and (\ref{ch2eq15}), 
\begin{equation}
\label{ch2eq16}
\tg k_1 d(r) = \frac{\frac{f}{\cos k_1R} -\tg k_1r}{1+\frac{f}{\cos k_1R}\tg k_1r} = \frac{f - \cos k_1 R \tg k_1 r}{f \tg k_1 r + \cos k_1R}.
\end{equation}
Taking the derivative of~(\ref{ch2eq16}) with respect to $r$ and equating it to zero we come to:

\begin{equation}
\label{ch2eq18}
f' \cos k_1R = k_1 \cos^2 k_1(R-r).
\end{equation}

From ~(\ref{ch2eq11}), $f'=\frac{k_1\cos k_1(R-r)\sin k_1(R-r)}{\sqrt{\cos^2 k_1 (R-r) - \cos^2 k_1R}}$. Substituting it to~(\ref{ch2eq18}) and making all necessary cancellations we will get  
\begin{equation}
\label{ch2eq19}
\cos k_1 R = \cos^2 k_1 (R-r).
\end{equation}

Thereby, from~(\ref{ch2eq19}) the maximum point (bearing in mind that $\cos k_1R > 0$) is
\begin{equation*}
r_0 = R - \frac{1}{k_1}\arccos\sqrt{\cos k_1R}.
\end{equation*} 

Finally, from~(\ref{ch2eq10}) and~(\ref{ch2eq19}) we obtain
$$
d(r)\leqslant d_0 = \max\limits_{r\in\left[0, R\right]} d(r)=d(r_0)=\frac{2}{k_1}\arccos\sqrt{\cos k_1R}-R,
$$
as required. The case 2) is proved.

3) The proof for the case 3) is identical to the case 2) with the change of the spherical trigonometry to the hyperbolic one. 

\end{proof}

\begin{remark}
With $k_1 \to 0$ the metrics of the spaces $S^{n+1}(k_1^2)$ and $\mathbb H^{n+1}(-k_1^2)$ tend to the euclidean metric. It is easy to show that the estimates~(\ref{ch2eq2}) and~(\ref{ch2eq3}) from Theorem~\ref{ch2th1} approach the estimate~(\ref{ch2eq1}) as $k_1 \to 0$. 
\end{remark}

\subsection{Proof of Theorem~\ref{ch2th1}}

We are going to prove Theorem~\ref{ch2th1} in all the cases simultaneously pointing out the differences when necessary.

Let us start with the \textit{regular case}. So we suppose that $\partial\Omega$ is a $C^2$-smooth hypersurface. 

We denote the inscribed in $\partial\Omega$ ball with the center at the point $O$ and the radius $r$ as $B$. Let also $\rho_1=\max dist (O,\partial\Omega)$ be the maximal distance from $O$ to the hypersurface. Then $\partial\Omega$ can be enclosed in a spherical layer of the width $d=\rho_1-r$. 

Let $\rho(r)$ be the radius of the circumscribe sphere from Lemma~\ref{ch2lem3}. 

We will show that for all $r\in[0,R]$
 \begin{equation}
\label{ch2t1eq1}
d=\rho_1-r \leqslant \rho(r)-r.
\end{equation} 

Then from this inequality and Lemma~\ref{ch2lem3} the estimates~(\ref{ch2eq1})-(\ref{ch2eq3}) $d\leqslant d_0$ will follow. 

Assume the contrary, so that 
 \begin{equation}
\label{ch2t1eq2}
d=\max dist(O,\partial\Omega)-r>\rho(r)-r.
\end{equation}

Let $P' \in \partial\Omega$ be a point on $\partial\Omega$ such that $\max dist(O,\partial\Omega) = dist(O,P')$ (see Fig.~\ref{ch2pic5}). Then since~(\ref{ch2t1eq2}), there is a point $P$ on the geodesic $OP'$ lying between $O$ and $P'$ such that    
 \begin{equation}
\label{ch2t1eq3}
dist(O,P)=\rho(r).
\end{equation}

Denote the point centrally symmetric to $P$ with respect to $O$ as $Q$ and consider the spindle-shaped hypersurface $v(P,Q)$.

\begin{figure}[h]
\begin{center}
\definecolor{uququq}{rgb}{0.25,0.25,0.25}
\begin{tikzpicture}[line cap=round,line join=round,>=triangle 45,x=1.0cm,y=1.0cm,scale=2]
\clip(-3,-1.4) rectangle (2.5,2.2);
\draw [shift={(-1.01,0.38)},line width=1.2pt]  plot[domain=-3.14:0,variable=\t]({0*0.69*cos(\t r)+1*0.2*sin(\t r)},{-1*0.69*cos(\t r)+0*0.2*sin(\t r)});
\draw [shift={(-1.01,0.38)},dash pattern=on 1pt off 1pt]  plot[domain=0:pi,variable=\t]({0*0.69*cos(\t r)+1*0.2*sin(\t r)},{-1*0.69*cos(\t r)+0*0.2*sin(\t r)});
\draw [shift={(-0.08,-0.38)},line width=1.2pt]  plot[domain=0.46:2.14,variable=\t]({1*1.73*cos(\t r)+0*1.73*sin(\t r)},{0*1.73*cos(\t r)+1*1.73*sin(\t r)});
\draw [shift={(-0.08,1.15)},line width=1.2pt]  plot[domain=0.46:2.14,variable=\t]({1*1.73*cos(\t r)+0*1.73*sin(\t r)},{0*1.73*cos(\t r)+-1*1.73*sin(\t r)});
\draw [line width=0.4pt] (-1.45,1.39)-- (-0.57,2.08);
\draw [line width=0.4pt] (-1.45,1.39)-- (-1.45,-1.31);
\draw [line width=0.4pt] (-1.45,-1.31)-- (-1.01,-0.97);
\draw [line width=0.4pt] (-1.01,-0.97)-- (2.04,-0.97);
\draw [line width=0.4pt] (2.04,-0.97)-- (2.04,1.73);
\draw [line width=0.4pt] (-1.01,1.73)-- (2.04,1.73);
\draw [line width=0.4pt,dash pattern=on 1pt off 1pt] (-1.01,-0.97)-- (-0.57,-0.63);
\draw [line width=0.4pt] (-0.57,2.08)-- (-0.57,1.73);
\draw [line width=0.4pt,dash pattern=on 1pt off 1pt] (-0.57,1.73)-- (-0.57,-0.63);
\draw [shift={(-0.08,-0.38)}] plot[domain=2.14:2.68,variable=\t]({1*1.73*cos(\t r)+0*1.73*sin(\t r)},{0*1.73*cos(\t r)+1*1.73*sin(\t r)});
\draw [shift={(-0.08,1.15)}] plot[domain=3.6:4.14,variable=\t]({1*1.73*cos(\t r)+0*1.73*sin(\t r)},{0*1.73*cos(\t r)+1*1.73*sin(\t r)});
\draw [shift={(-1.01,-0.65)}] plot[domain=1.57:2.5,variable=\t]({1*1.73*cos(\t r)+0*1.73*sin(\t r)},{0*1.73*cos(\t r)+1*1.73*sin(\t r)});
\draw [shift={(-1.01,-0.65)},line width=1.2pt]  plot[domain=0.64:1.57,variable=\t]({1*1.73*cos(\t r)+0*1.73*sin(\t r)},{0*1.73*cos(\t r)+1*1.73*sin(\t r)});
\draw [shift={(-1.01,1.42)}] plot[domain=3.78:4.71,variable=\t]({1*1.73*cos(\t r)+0*1.73*sin(\t r)},{0*1.73*cos(\t r)+1*1.73*sin(\t r)});
\draw [shift={(-1.01,1.42)},line width=1.2pt]  plot[domain=4.71:5.64,variable=\t]({1*1.73*cos(\t r)+0*1.73*sin(\t r)},{0*1.73*cos(\t r)+1*1.73*sin(\t r)});
\draw [line width=0.4pt,dotted,fill=black,fill opacity=0.05] (-1.01,0.38) circle (0.69cm);
\draw [shift={(-0.08,0.38)}] plot[domain=-3.14:0,variable=\t]({0*0.96*cos(\t r)+1*0.28*sin(\t r)},{-1*0.96*cos(\t r)+0*0.28*sin(\t r)});
\draw [shift={(-0.08,0.38)},dash pattern=on 1pt off 1pt]  plot[domain=0:pi,variable=\t]({0*0.96*cos(\t r)+1*0.28*sin(\t r)},{-1*0.96*cos(\t r)+0*0.28*sin(\t r)});
\draw (-1.62,0.73)-- (-1.01,0.38);
\draw (-1.62,0.73)-- (-0.08,0.38);
\draw (-2.86,0.38)-- (2.34,0.38);
\draw (-1.01,0.38)-- (-1.01,1.08);
\draw (-0.08,0.38)-- (-0.08,1.35);
\draw [line width=0.4pt] (-1.01,1.73)-- (-1.01,1.08);
\draw [line width=0.4pt] (-1.01,-0.31)-- (-1.01,-0.97);
\draw (-2.63,1.08) node[anchor=north west] {$v(P,Q)$};
\draw (1.69,1.63) node[anchor=north west] {$\pi_1$};
\draw (-1.41,-0.85) node[anchor=north west] {$\pi$};
\draw (-0.14,-0.02) node[anchor=north west] {$v_+(P,Q)$};
\draw (0.81,-0.3) node[anchor=north west] {$v_+(P',Q')$};
\draw (-0.92,0.1) node[anchor=north west] {$\partial D$};
\draw (-1.74,0.15) node[anchor=north west] {${B}$};
\fill [color=black] (1.47,0.38) circle (0.5pt);
\draw[color=black] (1.58,0.54) node {$P'$};
\fill [color=uququq] (-1.01,0.38) circle (0.5pt);
\draw[color=uququq] (-1.03,0.27) node {$O$};
\fill [color=uququq] (-1.01,1.08) circle (0.5pt);
\draw[color=uququq] (-1.12,1.23) node {$F$};
\fill [color=uququq] (0.37,0.38) circle (0.5pt);
\draw[color=uququq] (0.46,0.54) node {$P$};
\fill [color=uququq] (-2.4,0.38) circle (0.5pt);
\draw[color=uququq] (-2.5,0.54) node {$Q$};
\draw[color=black] (0.9,1.25) node {$\omega_1$};
\fill [color=uququq] (-1.63,0.38) circle (0.5pt);
\draw[color=uququq] (-1.74,0.54) node {$Q'$};
\draw[color=black] (-0.42,1.08) node {$\omega$};
\fill [color=uququq] (-0.08,0.38) circle (0.5pt);
\draw[color=uququq] (-0.09,0.27) node {$O'$};
\fill [color=uququq] (-1.62,0.73) circle (0.5pt);
\draw[color=uququq] (-1.72,0.81) node {$T$};
\draw[color=black] (-0.96,0.79) node {$r$};
\draw[color=black] (0.01,1.02) node {$r'$};
\end{tikzpicture}
\caption{}
\label{ch2pic5}
\end{center}
\end{figure}

Then $B$ is the inscribed ball for $v(P,Q)$ as well. Let us consider the hyperplane $\pi$ (meaning the totally geodesic submanifold of the co-dimension 1) passing through the point $O$ perpendicularly to $OP$. Let $D := \pi \cap B$ be the corresponding equatorial ball.

For an arbitrary point $F \in \partial D$, according to Lemma~\ref{ch2lem2}, a smaller circular arc $\omega$ of the radius $R$ for the points $P$ and $F$, which lies on $v(P,Q)$, also lies in the domain $\Omega$. Since the arbitrariness of the choice of $F\in \partial D$ we obtain that the part $v_{+}(P,Q)$ of $v(P,Q)$ which lies in the same half-space with respect to $\pi$ as the points $P$ and $P'$ is contained in $\Omega$.      

Let $Q'$ be the second intersection point of the geodesic $PQ$ with a smaller circular arc $\omega_1$ of the radius $R$ for $P'$ and $F$ which is turned with its convex side towards the geodesic $PQ$.  

Then obviously $\partial D$ lies on the spindle-shaped hypersurface $v(P',Q')$. Again, since Lemma ~\ref{ch2lem2}, the part $v_{+}(P',Q')$ is contained in the domain $\Omega$.   

At the same time, the arcs $\omega$ and $\omega_1$ cannot intersect. Since this holds for every point $F$, we have that $v_{+}(P,Q)\subset v_{+}(P',Q')$ and these parts intersect along $\partial D$.  

Let us note that all smaller circular arcs of the radius $R$ which lie on $v(P,Q)$ and connect $P$ with the points on $\partial D$ are perpendicular to the geodesics from the point $O$ to the points on the sphere $\partial D$. Since $v_{+}(P,Q)\subset v_{+}(P',Q')$, the angle between the arc $P'F$ and the geodesic $OF$ is greater than  ${\pi}/{2}$. Thus, the radius $r'$ of the inscribe ball $B'$ for $v(P',Q')$ is greater than $r$:  $r' > r$ and its center $O'$ lies between $O$ and $P'$.   

By construction, all the points of $v(P',Q')$ at the distance $r'$ from $O'$ lie on the geodesic rays starting at $O'$ and passing perpendicularly to $OP$. Since $O'F$ is not orthogonal to $OP$, $|O'F| = dist (O',\partial D) > r'$ (here again by $|\cdot|$ we denote the length of the geodesic segment in the corresponding spaces).

Let us consider some point $T \in \partial B_{-}$, where $B_{-}$ is the part of the ball $B$ which lies in another half-space with respect to the plane $\pi$ as the points $P$ and $P'$. Then in the geodesic triangle $\triangle OO'T$ the angle $O'OT > \frac{\pi}{2}$. Thus, be the law of cosines,
\begin{enumerate}
\item
in the Euclidean case ($c=0$)
\begin{equation*}
\begin{split}
|O'T|^2 &= |OO'|^2 + |OT|^2 - 2 |OO|' \cdotp |OT| \cos \angle O'OT >\\{} &> |OO'|^2 + |OT|^2= |OO'|^2 + |OF|^2 = |O'F|^2;
\end{split}
\end{equation*}
\item
in the spherical case ($c=k_1^2$)
\begin{equation*}
\begin{split}
\cos k_1|O'T| &= \cos k_1|OO'|\cos k_1|OT| +\\ & \sin k_1|OO'|\sin k_1|OT|  \cos \angle O'OT < \\ &\cos k_1|OO'|\cos k_1|OT|=\\ & \cos k_1|OO'|\cos k_1|OF| = \cos k_1|O'F|;
\end{split}
\end{equation*}
\item
in the hyperbolic case ($c=-k_1^2$)
\begin{equation*}
\begin{split}
\ch k_1|O'T| &= \ch k_1|OO'|\ch k_1|OT| - \\ & \sh k_1|OO'|\sh k_1|OT|  \cos \angle O'OT > \\ & \ch k_1|OO'|\ch k_1|OT|= \\ & \ch k_1|OO'|\ch k_1|OF| = \ch k_1|O'F|,
\end{split}
\end{equation*}
\end{enumerate}
since $\triangle O'OF$ is a right triangle. Therefore, $|O'T|>|O'F|>r'$. By the arbitrariness of the point $T$, the last inequality implies $B' \subset B_{-} \cap v_{+}(P',Q') \subset \Omega$. 

Hereby, we have found in $\Omega$ the ball with the radius greater that the radius of the inscribed ball. Contradiction. The theorem in the smooth case is proved.

\textit{Non-regular case}. Let now $\partial\Omega$ be an arbitrary complete $k_0$-convex hypersurface. We will apply the arguments similar to those in the proof of Lemma~\ref{ch2lem1}.

For $\partial\Omega$ let us consider external equidistant $C^{1,1}$-smooth $\varepsilon(\tau)$-convex hypersurfaces $\partial\Omega_\tau$ on the sufficiently small distance $\tau$, $\lim\limits_{\tau \to 0} \varepsilon(\tau) = k_0$. We can approximate them with $C^k$-smooth hypersurfaces $\partial\Omega_{\tau,\delta}$, $k\geqslant 2$, whose normal curvatures $k_n \geqslant \varepsilon(\tau) - \nu(\delta)$ with $\nu(\delta) \to 0+0$ when $\delta \to 0$. For such surfaces the estimates are obtained above. Taking limits with $\tau, \varepsilon \to 0$ we will get the required estimates in general case.

\subsection{Auxiliary results necessary for the proof of Theorem~\ref{genth1}}

Let $M^{n+1} (c)$ be a complete simply connected Riemannian manifold of the constant sectional curvature $c$. Let us consider a compact convex domain $\Omega$ in it, whose boundary $\partial\Omega$ is a $C^2$-smooth complete hypersurface. Denote $O \in \Omega$ to be a point inside the domain; $P \in \partial\Omega$ to be a point such that $dist (O, P) = dist(O, \partial\Omega)$; $\varphi(Q)$ to be the angle between the geodesic $OQ$, passing through $O$ and an arbitrary point $Q\in\partial\Omega$, and the outward normal to $\partial\Omega$ at the point $Q$. Let $S_P\subset M^{n+1}(c)$ be a sphere passing through the point $P$ perpendicularly to $OP$ such that the point $O$ belongs to the corresponding ball $B_P$, $S_P = \partial B_P$. Denote $\beta(\overline{Q})$ to be the angle between the geodesic $O\overline{Q}$ which is drawn through $\overline{Q} \in S_P$ and the outward normal to the sphere at $\overline{Q}$.

\begin{lemma}
\label{genlem1}
In the above notations, if for any two points $Q\in\partial\Omega$ and $\overline{Q} \in S_P$ such that the lengths of the geodesic segments $OQ$ and $O\overline{Q}$ are equal it holds that $$\varphi(Q) \leqslant \beta(\overline{Q}),$$ then $S_P$ is a tangential sphere to the hypersurface $\partial\Omega$ at $P$ and the domain $\Omega$ lies entirely in the ball $B_P$. 
\end{lemma}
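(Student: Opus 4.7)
Tangency at $P$ is immediate: since $|OP|=\mathrm{dist}(O,\partial\Omega)$ the minimising geodesic $OP$ is perpendicular to $T_P\partial\Omega$, and by construction $S_P$ is perpendicular to $OP$ at $P$, so $T_P\partial\Omega=T_PS_P$. For the inclusion $\Omega\subset B_P$, I would reduce the problem to a planar one. It is enough to show that along every geodesic ray from $O$ the first intersection with $\partial\Omega$ lies no further than the first intersection with $S_P$. Given such a ray, let $M^2\subset M^{n+1}(c)$ be the totally geodesic surface spanned by that ray and the geodesic $OP$; in constant curvature such a $2$-plane exists and itself has curvature $c$. The curves $\Gamma_\Omega:=\partial\Omega\cap M^2$ and $\Gamma_S:=S_P\cap M^2$ both contain $P$ and are tangent there; $\Gamma_S$ is a circle in $M^2$ of the same intrinsic radius as $S_P$, and $\Gamma_\Omega$ bounds the planar convex region $\Omega\cap M^2$. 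Decomposing the outward unit normal $n$ of $\partial\Omega$ at $Q\in\Gamma_\Omega$ into its $M^2$-tangential and $M^2$-normal parts, the identity $\cos\varphi(Q)=|n_{M^2}|\cos\varphi_M(Q)\le\cos\varphi_M(Q)$ (with $\varphi_M$ the corresponding angle in $M^2$) gives $\varphi_M\le\varphi\le\beta$ at matching radial distances, so the planar version of the hypothesis still holds.

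The planar statement is then proved by a one-variable ODE comparison. In polar coordinates $(t,\theta)$ on $M^2(c)$ centred at $O$ with $\theta=0$ in the direction of $P$, the metric is $dt^2+f_c(t)^2\,d\theta^2$, where $f_c(t)$ equals $t$, $k_1^{-1}\sin(k_1t)$ or $k_1^{-1}\sinh(k_1t)$ according to $c=0$, $c=k_1^2$, or $c=-k_1^2$. Writing $\Gamma_\Omega:t=\rho(\theta)$ and $\Gamma_S:t=\bar\rho(\theta)$, a direct calculation gives
\[
\tan\varphi_M=\rho'(\theta)/f_c(\rho),\qquad \tan\beta=\bar\rho'(\theta)/f_c(\bar\rho).
\]
Reparametrising each curve by $t$ on its portion adjacent to $P$, where $t$ is monotonic, the angular coordinates satisfy $d\theta_\Omega/dt=\cot\varphi_M(t)/f_c(t)$ and $d\theta_S/dt=\cot\beta(t)/f_c(t)$, both starting from $0$ at $t=h$. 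The rotational symmetry of $S_P$ around $OP$ makes $\beta$ a function of $t$ alone on $\Gamma_S$, so the hypothesis becomes $\varphi_M(t)\le\beta(t)$; hence $d\theta_\Omega/dt\ge d\theta_S/dt$, integrating gives $\theta_\Omega(t)\ge\theta_S(t)$, and inverting these monotone parametrisations yields $\rho(\theta)\le\bar\rho(\theta)$ on the corresponding range of $\theta$. Applying the argument symmetrically on the two sides of $P$ and piecing the monotonic arcs of $\rho$ together by the convexity of $\Gamma_\Omega$ gives the planar inclusion.

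The main obstacle will be the degeneracy at $t=h$, where both $\cot\varphi_M$ and $\cot\beta$ blow up. Using that $\partial\Omega$ and $S_P$ are $C^2$ with non-degenerate minima of the radial distance at $P$, a short Taylor expansion shows $\varphi_M(t),\beta(t)\sim C\sqrt{t-h}$ with comparable constants, so both singularities are integrable of the same order and the termwise comparison is valid in a neighbourhood of $P$. A secondary technical point is the possible non-unimodality of $\rho(\theta)$; one then repeats the comparison on each monotonic arc of $\rho$, restricting to radii that actually occur on $S_P$ and using the convexity of $\Omega$ to control the endpoints of the arcs.
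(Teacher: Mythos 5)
Your proof follows essentially the same route as the paper's: the identities $\tan\varphi_M=\rho'/f_c(\rho)$ and $\tan\beta=\bar\rho'/f_c(\bar\rho)$ are exactly the paper's computation $\cos\varphi=(1+|\nabla f|^2)^{-1/2}$, your comparison of $d\theta/dt$ at matched radii is the paper's comparison of $(f^{-1})'$ with $(\rho^{-1})'$ after inverting the monotone radial graphs, and your reduction to two-dimensional totally geodesic sections via $\varphi_M\le\varphi\le\beta$ is the paper's final step. The only place you are thinner than the paper is the globalization beyond the first monotonic arc of $\theta\mapsto f(\theta)$ --- naive chaining of the derivative comparison across non-monotonic arcs is delicate because the hypothesis matches points by radius rather than by angle; the paper instead argues by contradiction at the first angle $\theta_0$ where $\partial\Omega$ would cross outside $S_P$, where $f(\theta_0)=\bar\rho(\theta_0)$ makes the hypothesis directly applicable at a single point and forces at worst a tangential contact that the local argument then excludes.
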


\begin{proof}

As we have been doing before, let us introduce on $M^{n+1}(c)$ the polar coordinate system with the origin at $O$. Then the arc length will be of the form $ds^2 = dt^2 + g_{ij} d\theta^i d\theta^j$, where $t$ is the distance from the origin, $\theta^1,\ldots,\theta^n$ are coordinates of the standard euclidean unit sphere $S^n$. We can assume that the coordinates of the point $P = (h,0,\ldots,0)$, where $h=dist (O,Q) = dist (O, \partial\Omega)$.

Let $\partial\Omega$ be explicitly defined by the equation $t=f(\theta^1,\ldots,\theta^n)$, while $S_P$ be defined by $t=\rho(\theta^1,\ldots,\theta^n)$. It is possible since the convexity of the surfaces.

Consider points $Q \in \partial\Omega$ and $\overline{Q} \in S_P$ such that $|OQ|=|O\overline{Q}|$. Then the outward normals $N_{\partial\Omega}(Q)$, $N_{S_P}(\overline{Q})$ to the surfaces at these points can be written as 
\begin{equation}
\label{genlem1eq1}
\begin{split}
&N_{\partial\Omega}(Q) =
 \frac{\grad_{M(c)}(t-f)}{|\grad_{M(c)}(t-f)|}=
\frac{\partial_t - g^{ij} \frac{\partial f}{\partial \theta^i} \partial_{\theta^j}}{\sqrt{1 + g^{ij} \frac{\partial f}{\partial \theta^i}\frac{\partial f}{\partial \theta^j} } } = 
\frac{\partial_t - g^{ij} \frac{\partial f}{\partial \theta^i} \partial_{\theta^j}}{\sqrt{1 + |\nabla f|^2_{\partial\Omega}  } }, \\
&N_{S_P}(\overline Q) =
 \frac{\grad_{M(c)}(t - \rho)}{|\grad_{M(c)}(t - \rho)|}=
\frac{\partial_t - g^{ij} \frac{\partial \rho}{\partial \theta^i} \partial_{\theta^j}}{\sqrt{1 + g^{ij} \frac{\partial \rho}{\partial \theta^i}\frac{\partial \rho}{\partial \theta^j} } } = 
\frac{\partial_t - g^{ij} \frac{\partial \rho}{\partial \theta^i} \partial_{\theta^j}}{\sqrt{1 + |\nabla \rho|^2_{S_P}   }},
\end{split}
\end{equation}
where all the derivatives are taken at the corresponding points $Q$ or $\overline{Q}$, $\partial_t$, $\partial_{\theta^i}$, $i=1,\ldots,n$ is the coordinate basis of the tangent space $T_Q M^{n+1}(c)$ or $T_{\overline{Q}} M^{n+1}(c)$; also, we used the rule of summation over repeated indices.  

In view of~(\ref{genlem1eq1}), the angles between the radial directions $\partial_t(Q)$ and $\partial_t(\overline{Q})$ to the points $Q$ and $\overline{Q}$ and the corresponding normals are
\begin{equation}
\label{genlem1eq2}
\begin{split}
&\cos\varphi(Q) = \left<N_{\partial\Omega}(Q), \partial_t(Q)\right> = \frac{1}{\sqrt{1 + |\nabla f|^2_{\partial\Omega}  } },\\
&\cos\beta(\overline{Q}) = \left<N_{S_P}(\overline{Q}), \partial_t(\overline{Q})\right> = \frac{1}{\sqrt{1 + |\nabla \rho|^2_{S_P}  } }.
\end{split}
\end{equation}

And since the statement of the lemma $\varphi(Q) \leqslant \beta(\overline{Q})$, we finally obtain that at the corresponding points
\begin{equation}
\label{genlem1eq3}
|\nabla f|^2_{\partial\Omega} \leqslant |\nabla \rho|^2_{S_P}.
\end{equation}

Let us show that for all $\left(\theta^1,\ldots,\theta^n\right) \in S^n$,
\begin{equation}
\label{genlem1eq4}
f \left(\theta^1,\ldots,\theta^n\right) \leqslant \rho \left(\theta^1,\ldots,\theta^n\right).
\end{equation}
From this inequality, since the choice of the origin, Lemma~\ref{genlem1} will follow.

A) We start with the case $n=1$. For us it will be sufficient to show that for all $\theta \in S^1$,
\begin{equation}
\label{genlem1eq5}
f(\theta) \leqslant \rho(\theta).
\end{equation}
First we will show that~(\ref{genlem1eq5}) holds locally and then will extend it for the whole curve. 

For the polar coordinate system in a two-dimensional manifold $M^2 (c)$, $g^{-1}(t,\theta) = g_{11}^{-1}(t,\theta) = \frac{1}{\text{sc}^2k_1t}$, where
$$
\text{sc} k_1t=\begin{cases}
\sin k_1t,&\text{if $c=k_1^2 > 0$;}\\
t,&\text{if $c=0$;}\\
\sh k_1t,&\text{if $c=-k_1^2 < 0$.}
\end{cases}
$$

Thereby, $g^{-1}(t,\theta) > 0$ and does not depend on a value of the angle $\theta$. Thus, from the inequality~(\ref{genlem1eq3}) for those values $\theta_1$ and $\theta_2$ for which $f (\theta_1) = \rho (\theta_2)$ it holds that   
\begin{equation}
\label{genlem1eq6}
{f'}^2\left(\theta_1\right) \leqslant {\rho'}^2\left(\theta_2\right).
\end{equation}

If the radius of the circle $S_P$ does not equal to $h$, then it is known that the function $\rho(\theta)$ is strictly increasing on the segment $[0,\pi]$. If the radius of $S_P$ is equal to $h$, then $\rho \equiv h$ and from (\ref{genlem1eq6}) it follows that $f \equiv h$. Hence, (\ref{genlem1eq5}) will be automatically satisfied.

Due to the fact that $h=f(0)$ is the minimal distance, in some right neighborhood of zero $[0,\tilde \theta)$, $\tilde\theta < \pi$, the function $f(\theta)$ will be strictly increasing too.

Indeed, if in some neighborhood of zero $f \equiv h$, then (\ref{genlem1eq5}) locally holds. If for any arbitrary small right neighborhood of $0$ the function $f$ has points at which it is equal and is not equal to $h$, then let us consider the arc of the curve between two such points $P_1$ and $P_2$, at which $f$ is equal to $h$. Since the convexity, there is a neighborhood of $P_1$ lying on the arc between $P_1$ and $P_2$ for which $f$ is strictly increasing. Then we can assume $P = P_1$. 

Since $f(0)=\rho(0)=h$, we can choose $\tilde\theta$ such that for any $\theta_2 \in [0,\tilde\theta)$ there is $\theta_1 \in [0,\tilde\theta)$ satisfying $f(\theta_1) = \rho(\theta_2)$ and on $[0,\tilde\theta)$ the function $f$ is strictly increasing. Thus, in this neighborhood from~(\ref{genlem1eq6}) 
\begin{equation}
\label{genlem1eq7}
0 < f'(\theta_1) \leqslant \rho'(\theta_2).
\end{equation}

Due to the last inequalities, if to denote $\tilde h:=f(\tilde\theta)$, then on the segment $[h;\tilde h)$ we can define the inverse functions $\theta = f^{-1}(t)$, $\theta = \rho^{-1} (t)$. Setting $t_0:=f(\theta_1)=\rho (\theta_2)$ from~(\ref{genlem1eq7}) we obtain
\begin{equation}
\label{genlem1eq8}
\left(f^{-1}\right)'(t_0) = \frac{1}{f'(\theta_1)} \geqslant \frac{1}{\rho'(\theta_2)} = \left(\rho^{-1}\right)'(t_0)>0.
\end{equation}
Hence, $f^{-1}$ is increasing not slower than $\rho^{-1}$. And since $f^{-1}(h) = \rho^{-1} (h) = 0$, then $\theta_1 = f^{-1}(t_0) \geqslant \rho^{-1}(t_0) = \theta_2$. Even more, if in~(\ref{genlem1eq8}) we have a strict inequality at least at one point, then $\theta_1 > \theta_2$. Therefore, due to the monotonicity of $f$,   
\begin{equation*}
f(\theta_2) < f(\theta_1) = \rho(\theta_2).
\end{equation*}   
Since $\theta_2$ is an arbitrary value in $[0;\tilde\theta)$, the last inequality proves~(\ref{genlem1eq5}) on the chosen interval. If in~(\ref{genlem1eq8}) we have an equality everywhere on $[h;\tilde h)$, then on the chosen interval the curve coincide with the arc of $S_P$.

The similar considerations applied to the left neighborhood of zero prove that $S_P$ is a locally supporting circle at the point $P$ and $\partial\Omega$ lies locally inside $S_P$ or coincide with it by some arc containing the point $P$. Let us show that the same holds globally, i.e. for all $\theta \in S^1$.

Let us assume the contrary. Since locally $\partial\Omega \subset B_P$, the curve $\partial\Omega$ has to move outside the circle $S_P$. Let $\theta_0 \in [0, 2\pi]$ be the first value for which $\partial\Omega$ intersects $S_P$  and moves outsides. We get that $f(\theta_0) = \rho(\theta_0)$. By the condition of our lemma, at the point $Q_0 = (f(\theta_0),\theta_0) = (\rho(\theta_0),\theta_0) \in \partial\Omega \cap S_P$ for the corresponding angles we have    
$$
\varphi(Q_0) \leqslant \beta(Q_0),
$$
which contradicts the fact that the curve moves outside the circle (see Fig.~\ref{genlem1pic1}). The case of equal angles is impossible since the local arguments above.

\begin{figure}[h]
\begin{center}
\begin{tikzpicture}[line cap=round,line join=round,>=triangle 45,x=1.0cm,y=1.0cm,scale=1.3]
\clip(0.7,-2.7) rectangle (7,2);
\draw [shift={(3.76,-0.44)},fill=black,fill opacity=0.1] (0,0) -- (90:0.6) arc (90:142.35:0.6) -- cycle;
\draw [shift={(3.76,-0.44)},fill=black,fill opacity=0.1] (0,0) -- (90:0.9) arc (90:126.92:0.9) -- cycle;
\draw [dotted] (1.28,-0.44)-- (6.26,-0.44);
\draw [->] (3.76,-0.44) -- (3.76,1.36);
\draw [->] (3.76,-0.44) -- (2.68,1);
\draw [->] (3.76,-0.44) -- (2.33,0.66);
\draw [line width=1.2pt] (2.4,-2.2)-- (5.18,1.41);
\draw [line width=1.2pt] (1.71,-1.98)-- (5.8,1.1);
\draw (5.38,-0.46) node[anchor=north west] {$t=const$};
\draw (3.4,1.9) node[anchor=north west] {$\partial_t$};
\draw (2.23,1.56) node[anchor=north west] {$N_{S_P}$};
\draw (1.57,0.96) node[anchor=north west] {$N_{\partial\Omega}$};
\draw (2.66,-1.92) node[anchor=north west] {$\partial\Omega$};
\draw (1.4,-1.28) node[anchor=north west] {$S_P$};
\draw (3.76,-0.46) node[anchor=north west] {$Q_0$};
\draw (3.8,0.76) node[anchor=north west] {$\beta$};
\draw (2.9,0) node[anchor=north west] {$\varphi$};
\begin{scriptsize}
\fill [color=black] (3.76,-0.44) circle (1.5pt);
\end{scriptsize}
\end{tikzpicture}
\caption{}
\label{genlem1pic1}
\end{center}
\end{figure}

Hereby, we came to the contradiction, thus proving~(\ref{genlem1eq5}) alongside with the lemma for the case $n=1$.  

B) If $n \ne 1$, then let us consider in $M^{n+1}(c)$ an arbitrary two-dimensional totally geodesic submanifold $M^2(c)$ that contains the geodesic $OP$. This submanifold intersects the sphere $S_P$ along the two-dimensional circle and intersects the hypersurface $\partial\Omega$ along the two-dimensional curve. For them the condition of the lemma still holds: $\tilde\varphi(Q) \leqslant \varphi(Q) \leqslant \beta(\overline Q) = \tilde\beta (\overline Q)$, where $\tilde \varphi (Q)$, $\tilde \beta (\overline Q)$ are the angles between the geodesics $OQ$ and $O\overline Q$ and normals at $Q \in \partial\Omega$ and $\overline Q \in S_P$ to the curves in the section, accordingly. 

Therefore, we can apply the consideration from the above case A) and obtain that the curve lies inside the circle. And since it is true for an arbitrary  $M^2(c)$, we get $\partial\Omega \subset B_P$. The lemma is proved.     

\end{proof}

\subsection{Proof of Theorem~\ref{genth1}}

Let $O$ be the center of the inscribe ball $B$ for $\partial\Omega$, $r$ be its radius. In the tangent space $T_O M^{n+1}$ let us consider the domain $D := \exp^{-1}_O (\Omega)$. Then $\partial D = \exp^{-1}_O (\partial\Omega)$. 

Denote $\overline{O} \in M^{n+1}(c)$ to be an arbitrary point in a manifold of the constant sectional curvature $c$. Identifying the tangent spaces $T_O M^{n+1}$ and $T_{\overline{O}} M^{n+1}(c)$ by isometry, we can define $\overline{\Omega} := \exp_{\overline{O}} D$. Then $\partial\overline{\Omega} = \exp_{\overline{O}} (\partial D)$. We also denote $\overline B:= \exp_{\overline{O}} \left(\exp^{-1}_O B\right)$ which will be the the inscribe ball for $\partial\overline{\Omega}$ of the radius $r$.

Let us introduce on the manifolds $M^{n+1}$ and $M^{n+1}(c)$ the polar coordinate systems with the origins at $O$ and $\overline{O}$ respectively. Then their arc lengths can be written as
\begin{equation*}
\begin{split}
&M^{n+1}: ds^2 = dt^2 + g_{ij} d\theta^i d\theta^j ,\\
&M^{n+1}(c): ds^2 = dt^2 + G_{ij} d\theta^i d\theta^j,
\end{split}
\end{equation*}
where, similarly to Lemma~\ref{genlem1}, $t$ is the distance parameter, $\theta^1,\ldots,\theta^n$ are coordinates on the standard unit euclidean sphere $S^n$. 

Moreover (see~\cite{BurZal}), if all sectional curvatures $K_\sigma$ of $M^{n+1}$ are non-positive $0\geqslant K_\sigma \geqslant -k_1^2$, then the polar coordinate system will be regular everywhere except the origin. If all sectional curvatures of $M^{n+1}$ are positive $k_2^2 \geqslant K_\sigma \geqslant k_1^2 >0$, then the coordinate system will be regular in the ball of the radius $\pi/k_2$ with the deleted center. Therefore, by the condition of the theorem, the domain $\Omega \subset M^{n+1}$ lies in the domain of regularity of the chosen on $M^{n+1}$ polar coordinate system.  

Using the classical comparison techniques (see~\cite{Pet}) for manifolds whose sectional curvatures $K_\sigma \geqslant c$ we have that for the first fundamental forms $g$ and $G$, which are defined by the matrices $(g_{ij})$, $(G_{ij})$, and for any vector $x(x^1,\ldots,x^n)$ it holds that   
\begin{equation}
\label{genth1eq4}
g_{ij}x^i x^j \leqslant G_{ij} x^i x^j
\end{equation}
(where the fundamental forms are taken with the same values of the parameters).

Then from~(\ref{genth1eq4}) for the inverse matrices $(g^{ij})$, $(G^{ij})$ and for any co-vector $a(a_1,\ldots,a_n)$ we have 
\begin{equation}
\label{genth1eq5}
g^{ij}a_i a_j \geqslant G^{ij}a_i a_j.
\end{equation}

We can suppose that $\partial\Omega$ is defined explicitly by the equation $t = f(\theta^1,\ldots,\theta^n)$. Then, by construction, $\partial\overline{\Omega}$ is defined by the same equation. If $N$ and $\overline{N}$ are the unit outward normals at the points $Q \in \partial\Omega$ and $\overline{Q} \in \partial\overline{\Omega}$, which correspond each other by the isometry of tangent spaces, then similarly to Lemma~\ref{genlem1} they can be written as
\begin{equation}
\label{genth1eq6}
\begin{split}
&N(Q) =
 \frac{\grad_M(t-f)}{|\grad_M(t-f)|}=
\frac{\partial_t - g^{ij} \frac{\partial f}{\partial \theta^i} \partial_{\theta^j}}{\sqrt{1 + |\nabla f|^2_{\partial\Omega}  } }, \\
&\overline{N}\left(\overline{Q}\right) =
 \frac{\grad_{M(c)}(t - f)}{|\grad_{M(c)}(t - f)|}=
\frac{\partial_t - G^{ij} \frac{\partial f}{\partial \theta^i} \partial_{\theta^j}}{\sqrt{1 + |\nabla f|^2_{\partial\overline\Omega}  }}.
\end{split}
\end{equation}

Using~(\ref{genth1eq6}), the cosines of the angles $\varphi(Q)$ and $\overline{\varphi}\left(\overline{Q}\right)$ between the radial direction $\partial_t$ and the corresponding outer normals  $N$ or $\overline{N}$ are equal to
\begin{equation*}
\begin{split}
&\cos\varphi\left(Q\right) = \left<N(Q), \partial_t\right> = \frac{1}{\sqrt{1 + |\nabla f|^2_{\partial\Omega}  } }, \\
&\cos\overline{\varphi}\left(\overline{Q}\right) = \left<\overline{N}\left(\overline{Q}\right), \partial_t\right> = \frac{1}{\sqrt{1 + |\nabla f|^2_{\partial\overline\Omega}  } }.
\end{split}
\end{equation*}

From these relations and~(\ref{genth1eq5}) we obtain that at the corresponding points  
\begin{equation}
\label{genth1eq7}
\cos\varphi\left(Q\right) \leqslant \cos\overline{\varphi}\left(\overline{Q}\right).
\end{equation} 

Let $P \in \partial\Omega \cap B$ be one of the tangency points of the inscribed ball $B$ and the hypersurface $\partial\Omega$, $dist(O,\partial\Omega) = dist(O,P) = r$, and $\overline P \in \partial\overline{\Omega}$ is the corresponding to it by the isometry point,  $dist(\overline O,\partial\overline\Omega) = dist(\overline O, \overline P) = r$, $\overline P \in \partial\overline\Omega \cap \overline B$. Let us consider in the manifold $M^{n+1}(c)$ the sphere $S_{\overline P}$ of the sectional curvature $k_0^2$ passing through the point $\overline P$ perpendicularly to the geodesic  
$\overline O \,\overline P$ such that the point $O$ lies in the corresponding to it ball $B_{\overline P}$.

As above, for an arbitrary point $Q_0 \in S_{\overline P}$ we will denote the angle between the radial direction in $Q_0$ and the outward normal in it as $\beta(Q_0)$. Then from the proof of Theorem~\ref{ch1th2} it follows that at the points $Q \in \partial\Omega$ and $Q_0 \in S_{\overline P}$ with $dist(O,Q) = dist(\overline O, Q_0)$ it holds that
\begin{equation}
\label{genth1eq8}
\cos \beta(Q_0) \leqslant \cos \varphi (Q).
\end{equation}

From~(\ref{genth1eq7}) and~(\ref{genth1eq8}) we obtain that for any two points $\overline{Q} \in \partial\overline\Omega$ and $Q_0 \in S_{\overline P}$ such that $dist(\overline O, \overline Q) = dist(\overline O, Q_0)$ the following holds 
\begin{equation*}
\cos \beta(Q_0) \leqslant \cos\overline{\varphi}\left(\overline{Q}\right).
\end{equation*}

Therefore, by Lemma~\ref{genlem1}, the sphere $S_{\overline P}$ is globally supporting for the hypersurface $\partial\overline\Omega$ and the domain $\overline\Omega$ lies entirely in the ball $B_{\overline P}$:
\begin{equation}
\label{genth1eq10}
\overline\Omega \subset B_{\overline P}.
\end{equation}

It is obvious that~(\ref{genth1eq10}) is valid for every point $\overline P \in \partial \overline\Omega \cap \overline B$.

Let us consider the domain $$\mathcal C := \bigcap_{\overline P \in \partial \overline\Omega \cap \overline B} B_{\overline P}.$$

By construction, $\partial\mathcal C$ is a complete $k_0$-convex hypersurface. Additionally, since~(\ref{genth1eq10}) we have 
\begin{equation}
\label{genth1eq11}
\overline\Omega \subset \mathcal C.
\end{equation}

Using the arguments similar to those from the proof of Theorem~\ref{ch2th1} let us show that the ball $\overline B$ is the inscribe ball for $\partial \mathcal C$.

Indeed, since the ball $B$ is inscribe for $\partial\Omega$, the set  $\partial\Omega \cap B$ is not contained in any open hemisphere of $\partial B$. By construction, the same holds for the set $\partial \overline \Omega \cap \overline B$. 

Now, let us assume the contrary so that  $\overline B$ is not the inscribe ball for $\partial\mathcal C$. Then there exists a ball $B_1 \subset \mathcal C$ of the same radius that does not coincide with $\overline B$. Let $O_1$ be its center.

Denote $\pi_0$ and $\pi_1$ to be two totally geodesic $n$-dimensional submanifolds of $M^{n+1}(c)$ passing through the points $\overline O$ and $O_1$ respectively perpendicularly to the geodesic $\overline O O_1$. Every point $\overline P \in \pi_0 \cap \overline B$ corresponds by parallel translation along the geodesic $\overline O O_1$ to some point $P_1 \in \pi_1 \cap B_1$. Since the hypersurface $\partial \mathcal C$ is $k_0$-convex, from Lemma~\ref{ch2lem2} we know that any smaller circular arc of the curvature $k_0$ for the points $\overline P$ è $P_1$ is contained in the domain $\mathcal C$. Let us choose among them an arc $s$ which forms with the geodesics $\overline{OP}$ and $O_1P_1$ the angles bigger than $\pi/2$. Since we can do this for every point $\overline P \in \pi_0 \cap \overline B$, then the part of $\partial \overline B$ that lies in the same open half-space with respect to $\pi_0$ as the point $O_1$ does not contain any point of $\partial \mathcal C$, and thus any point of $\partial \overline \Omega \cap \overline B$. 

Therefore, some points $\overline P \in \partial \overline \Omega \cap \overline B$ must belong to the equatorial circle $\pi_0 \cap \partial\overline B$. But at such points the supporting sphere of the curvature $k_0^2$ is perpendicular to the geodesic $\overline O\,\overline P$. Hence, the corresponding arc $s$ is not enclosed by this sphere, which contradicts the construction of $\mathcal C$ and the fact that $s$ lies in $\mathcal C$. 

We came to the contradiction and thus proved that $\overline B$ is the inscribed ball of the radius $r$ for $\partial \mathcal C$.

But then, since $\partial\mathcal C$ is a complete $k_0$-convex hypersurface, for the width $\max dist(\overline O, \partial \mathcal C) - r$ of the spherical layer, which, obviously, encloses $\partial\mathcal C$, the estimates from Theorem~\ref{ch2th1} hold.  

From~(\ref{genth1eq11}), $\max dist(\overline O, \partial \overline \Omega) - r \leqslant \max dist(\overline O, \partial \mathcal C) - r$. By construction, $\max dist(\overline O, \partial \overline \Omega) - r = \max dist(O, \partial \Omega) - r$. Thus, $$\max dist(O, \partial \Omega) - r \leqslant \max dist(\overline O, \partial \mathcal C) - r,$$ from which, in virtue of Theorem~\ref{ch2th1}, we obtain the estimates for the width of a spherical layer that encloses a hypersurface lying in a Riemannian manifold of the constant-sign sectional curvature. The theorem is proved.

\end{document}